\newtheorem{thm}{Theorem}[section]
\newtheorem{lem}[thm]{Lemma}
\newtheorem{defn}[thm]{Definition}
\newtheorem{cor}[thm]{Corollary}
\newtheorem{rmk}[thm]{Remark}
\newcommand{\be}{\begin{eqnarray}}
\newcommand{\ee}{\end{eqnarray}}
\newcommand{\bee}{\begin{eqnarray*}}
\newcommand{\eee}{\end{eqnarray*}}
\newcommand{\beal}{\begin{aligned}}
\newcommand{\enal}{\end{aligned}}
\newcommand{\eps}{\varepsilon}
\newcommand{\T}{\mathbb{T}}
\newcommand{\R}{\mathbb{R}}
\newcommand{\Q}{\mathbb{Q}}
\newcommand{\cG}{\mathcal{G}}
\newcommand{\N}{\mathbb{N}}
\newcommand{\Z}{\mathbb{Z}}
\newcommand{\Om}{\Omega}
\newcommand{\cE}{\mathcal{E}}
\newcommand{\cZ}{\mathcal{Z}}
\newcommand{\cN}{\mathcal{N}}
	\def\textr{\textcolor{red}}
\title{coexistence of $1/2,1/3-$caustics for deformative nearly circular billiard maps}
\author{Vadim Kaloshin\dag}
\address{\dag\  University of Maryland, College Park, MD, USA }
\email{vadim.kaloshin@gmail.com}
\author{Jianlu Zhang\ddag}
\address{\ddag\ Institute of Theoretical Studies, ETH Z\"urich, CH-8092 Z\"urich, Switzerland}
\email{jianlu.zhang@math.ethz.ch}
\thanks{}
\subjclass{37E40, 70H09}
\keywords{convex billiard map, caustic, trigonometric polynomial, analytic function, nearly circular deformation, generating function}
\date{}
\begin{document}
\maketitle
\begin{abstract} For symmetrically analytic deformation of the circle (with certain Fourier decaying rate), the necessary condition for the corresponding billiard map to keep the coexistence of $1/2,1/3$ caustics is that the deformation has to be an isometric transformation.
\end{abstract}
\vspace{10pt}

\section{Introduction}\label{0}\vspace{10pt}
Suppose $\Om\subset \R^2$ is a strictly convex domain, with the boundary $\partial\Om$ is $C^r$ smooth, $r\geq 2$. The billiard problem inside $\Om$ can be described as the following:  \\

{\it A massless particle moves with unit speed and no friction following a rectilinear path inside the domain $\Om$. When the ball hits the boundary, it is reflected elastic-
ally according to the law of optical reflection: the angle of reflection equals the angle of incidence.} \\

This problem was first investigated by Birkhoff (see [3]). Later we can see that such trajectories are called {\bf broken geodesics}, as they correspond to local maximizers of the distance functional. The {\bf billiard map} can be identified by the correspondence of the positions in one reflection $\phi:P_0\rightarrow P_1$, see Fig. \ref{fig1}.\\

\begin{figure}
\begin{center}
\includegraphics[width=6cm]{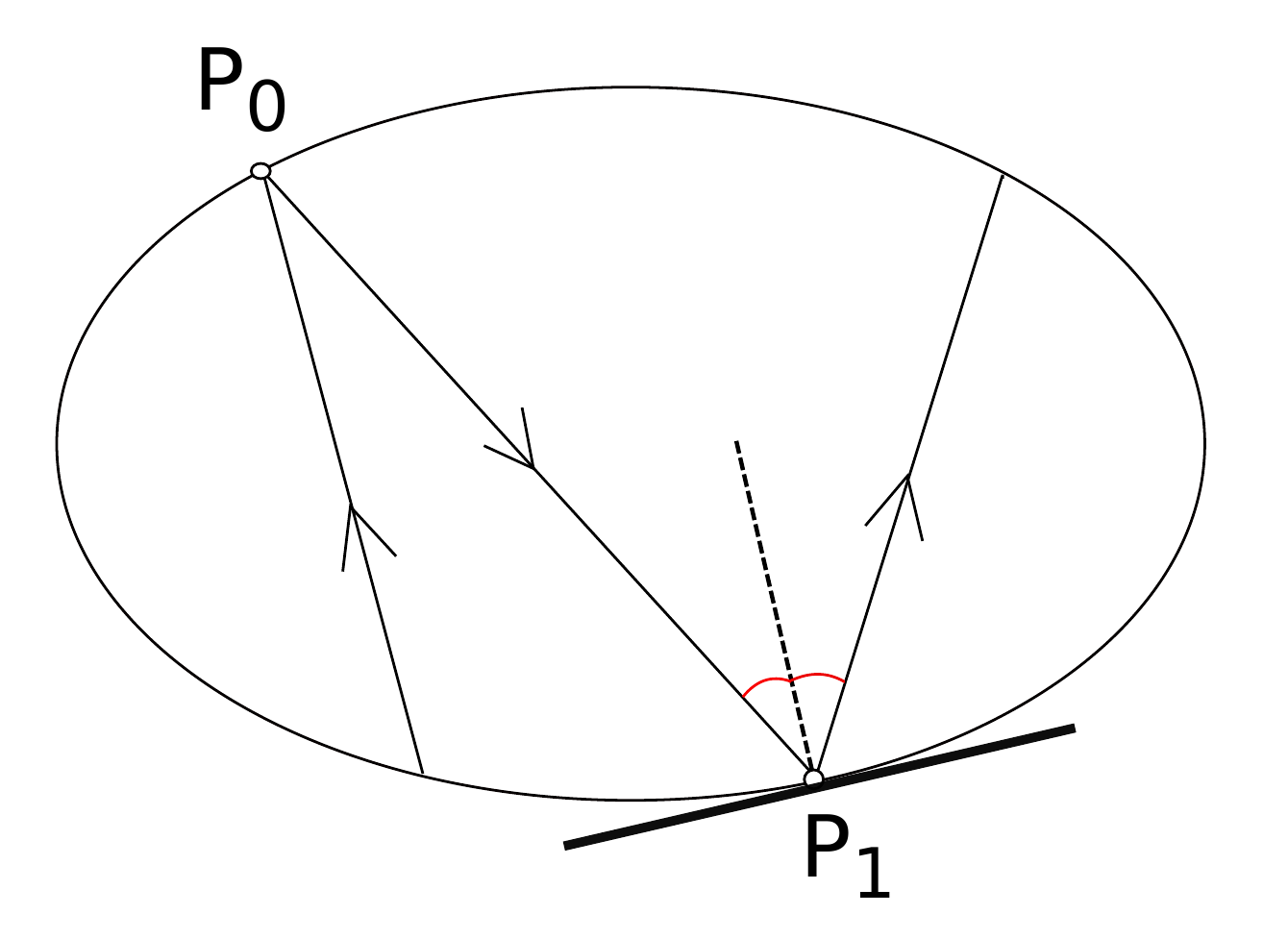}
\caption{The reflective angle keeps equal to the incident angle for every rebound.}
\label{fig1}
\end{center}
\end{figure}

Now let's introduce a coordinate for the billiard map. Suppose that $\partial\Om$ is parametrized by arc length $s$ with the circumference of $\partial\Om$ equal to $1$, and $v$ is the angle between $P_1-P_0$ and the positive tangent to $\partial\Om$ at $P_0$. Then the billiard map can be described as:
\be
\phi:(s,v)\rightarrow(s_1,v_1)
\ee
defined on the closed annulus $\mathbb{A}=[0,1]\times[0,\pi]$. Obviously $\phi|_{\partial\mathbb{A}}=Id$, i.e. 
\[
\phi(s,0)=(s,0),\quad\phi(s,\pi)=(s,\pi),\quad\forall s\in\T.
\]
Let's denote by 
\be
h(x,x'):=-d(P_0,P_1), \quad (x,x')\in\R^2, 
\ee
where $d(\cdot,\cdot)$ is the Euclid distance of $\R^2$, $x\equiv s\;(mod \;2\pi)$ and $x'\equiv s'\;(mod \;2\pi)$. It's easily to find that 
\[
h(x+1,x'+1)=h(x,x'),\quad(x,x')\in\R^2
\]
and
\be\label{p-h}
\partial_1 h=\cos v,\quad \partial_2h=-\cos v'.
\ee
The twist property is implied by $-\partial_{12}h>0$ once the boundary is strictly convex \cite{M2}. \\

Let's make a rule for $s\in\T=\R/[0,2\pi]$: we always choose the counter clockwise direction to order the configurations, that means for any $q-$tuple $(s_0,s_1,\cdots,s_{q-1})$ with $s_i\in\T$, $q\geq2$ and $0\leq i\leq q-1$, we can fix a unique configuration $(x_0,x_1,\cdots,x_{q-1})$ in the universal covering space $\R$, such that 
\[
x_i\equiv s_i\;(mod\; 2\pi)\text{\;for\;} 0\leq i\leq q-1,
\]
\[
 \quad  0\leq x_{i+1}-x_i<1\text{\;for\;} 0\leq i\leq q-2,
 \]
 \[
 \quad x_0\in[0,1).
\]
This unique configuration implies the following definition:
\begin{defn}
For a $q-$periodic tuple $S=(s_0,s_1,\cdots,s_{q-1},s_q)$ with $s_0=s_q$, we define the {\bf winding number} of it by 
\[
p:=\frac{x_q-x_0}{2\pi}\in\Z^+
\]
and the {\bf rotation number} by
\[
\rho(S)=\frac p{ q},
\]
where $X=(x_0\cdots,x_q)$ is the configuration corresponding to $S$ defined above.
\end{defn}
\begin{rmk}
Due to previous setting, we can see that for any $q-$periodic tuple $S$, the rotation number $\rho(S)\in[0,1)\cap\Q$.
\end{rmk}

For $q\geq2$ and a fixed $s\in\T$, we can define a $q-$periodic configuration set $C_{p/q}(s)$ by
\[
C_{p/q}(s):=\big\{(x_0,\dots,x_q)\in\R^{q+1}\big|0\leq x_{i+1}-x_i<1\text{\;for\;} 0\leq i\leq q-2,x_q-x_0=p,x_0\equiv s\;(mod\; 2\pi)\big\}.
\]
Then the following {\bf action function} is well defined
\be\label{per-action}
F_{p/q}(s):=\inf_{\gamma\in C_{p/q}(s)}\sum_{i=0}^{q-1}h(\gamma_i,\gamma_{i+1})
\ee
and the minimizer $\gamma^*$ obeys the discrete Euler Lagrange equation 
\be\label{e-l}
\partial_1h(\gamma^*_i,\gamma^*_{i+1})+\partial_2h(\gamma^*_{i-1},\gamma^*_i)=0,\quad\forall\; i=1,2,\dots,q-1.
\ee
Moreover, due to (\ref{p-h}) the corresponding $\{(s_i,v_i)\}_{i=0}^{q}$ is an orbit of the billiard map $\phi$, see \cite{B}.
\begin{rmk}
Notice that $\{(s_i,v_i)\}_{i=0}^{q}$ may not be a `real' periodic orbit, because $v_0\neq v_q$ could happen. However, if we interpret $F_{p/q}(s)$ as a function defined on $\T$, then the minimizer $s^*$ will definitely correspond to a `real' periodic orbit $\gamma^*$ of the billiard map.
\end{rmk}

\begin{defn} We call a (possibly not connected) curve $\Gamma\subset\Om$ a {\bf caustic} if any billiard orbit having one segment tangent to $\Gamma$ has all its segments tangent to it. Precisely, $\Gamma_{p/q}$ is an {\bf $p/q-$rational caustic} if all the corresponding (noncontractible) tangential orbits are periodic with the rotation number $p/q$.
\end{defn}

\begin{rmk}
In the remaining part of this paper, we agree that all caustics that we will consider will be smooth and convex; we will refer to such curves simply as caustics. By the Birkhoff's Theorem of general exact monotone twist maps, such a rational caustic will correspond to a $\phi-$invariant curve in the phase space, i.e. formally we can express by 
\[
\Gamma_{p/q}=\{(s,g_{p/q}(s))\in\mathbb A|s\in\T\}
\]
which consists of periodic orbits of ratation number $p/q$. Moreover, $\Gamma_{p/q}$ is non-contractible and then $g_{p/q}(s)$ is a Lipschitz graph, see \cite{B2}.

Besides, there is reversibility of the billiard map $\phi$, i.e. $\phi\circ I(s,v)=I\circ\phi^{-1}(s,v)$ for all $(s,v)\in\mathbb A$, where $I(s,v)=(s,\pi-v)$ is a reflective transformation. Benefit from this, we can see that $\Gamma_{p/q}=\Gamma_{1-p/q}$. So in the following we can impose that the rotation number of the caustic belongs to $[0,1/2]$.
\end{rmk}
\begin{thm}\cite{M}
For an exact symplectic twist map, every orbit on a non contractible invariant curve is minimizing in the sense of variation.
\end{thm}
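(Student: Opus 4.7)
The plan is to use the classical Aubry--Mather ordering argument: a non-contractible invariant curve forces all orbits on it to be pairwise ordered, and ordering together with the strict submodularity of the generating function forces action-minimality. The starting structural input is supplied by the Birkhoff theorem quoted in the preceding remark: any non-contractible invariant curve $\Gamma$ is the graph of a Lipschitz function over $\T$. Consequently, each orbit on $\Gamma$ lifts to a strictly monotone sequence $(x_i)\subset\R$, the induced dynamics on $\Gamma$ is conjugate to a circle homeomorphism with a well-defined rotation number $\omega\in[0,1]$, and two distinct orbits on $\Gamma$ cannot cross in the cyclic order of $\T$.

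The technical engine I would invoke is the exchange (Aubry crossing) lemma, which is equivalent to the twist hypothesis $\partial_{12}h<0$: for $a<b$ and $c<d$,
\[
h(a,c)+h(b,d)<h(a,d)+h(b,c),
\]
i.e.\ $h$ is strictly submodular. The consequence is that if two configurations cross at an intermediate index, swapping their pieces at that crossing strictly decreases the total action. Applied to an orbit segment $(x_0,\dots,x_n)$ on $\Gamma$ and a competitor $(y_0,\dots,y_n)$ with the same endpoints, one extends both configurations forward and backward by iterating the orbit on $\Gamma$ and uses the graph property of $\Gamma$ to guarantee an intermediate crossing; the exchange then produces an admissible configuration with strictly smaller action than the competitor, contradicting the assumption that the competitor beats $(x_i)$.

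To conclude, for rational rotation number $\omega=p/q$ every orbit on $\Gamma$ is periodic and the above argument identifies it with the infimum defining $F_{p/q}(s)$ in (\ref{per-action}). For irrational $\omega$ I would approximate the given orbit segment by segments of periodic orbits with $p_n/q_n\to\omega$, selected through the conjugacy of $\phi|_\Gamma$ with a circle rotation, and pass to the limit using the continuity of $h$ and the lower semicontinuity of the finite-sum action.

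The hard part is combinatorial rather than analytic: the competitor $(y_i)$ is not a priori comparable with $(x_i)$ at intermediate indices, so one must argue, using the monotone structure coming from Birkhoff, that a genuine first crossing exists and that the submodular inequality applies there with a strict sign. Equally delicate is the reduction from the ``same endpoints'' comparison that appears in the definition of $F_{p/q}$ (with only $x_0$ fixed modulo $2\pi$ and $x_q-x_0=p$) to the ordered two-orbit picture to which the exchange lemma applies; this is the step where one must exploit that $\Gamma$ consists of a whole family of ordered orbits, not just of the single orbit under comparison.
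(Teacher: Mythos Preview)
The paper does not prove this theorem at all: it is simply quoted from \cite{M} (Meiss, \emph{Symplectic maps, variational principles, and transport}) and immediately used to deduce Corollary~1.8. So there is no ``paper's own proof'' to compare against.

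That said, your sketch is the standard Aubry--Mather argument and is essentially correct in outline. The two ingredients you name---Birkhoff's graph theorem for the invariant curve and the strict submodularity (Aubry crossing lemma) coming from $\partial_{12}h<0$---are exactly what drives the classical proof. One clarification: the cleanest way to finish is not to pit a single orbit against an arbitrary competitor and hunt for a crossing, but rather to observe that \emph{all} lifts of orbits on $\Gamma$ form a totally ordered family of stationary configurations (since $\Gamma$ is a graph), and then invoke the standard field-of-extremals argument: any configuration that crosses a member of such an ordered family can be improved by the exchange lemma, so a minimizer cannot cross any orbit on $\Gamma$, hence must lie on $\Gamma$ itself. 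This sidesteps the ``first crossing'' bookkeeping you flag as delicate. Your irrational-$\omega$ approximation step is unnecessary once one argues this way, since minimality of finite segments is what is being proved and does not depend on periodicity.
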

By applying this Theorem to the billiard map, we get
\begin{cor}
Suppose the billiard map $\phi$ has a $p/q-$rational caustic, then $F_{p/q}(s)$ equals a constant for all $s\in\T$.
\end{cor}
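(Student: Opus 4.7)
The plan is to combine Mather's minimization theorem (quoted just above the corollary) with an envelope theorem computation. The key observation is that, although the minimizer of $F_{p/q}$ for a fixed basepoint $s$ need not in general come from a genuine $\phi$-periodic orbit (the remark preceding Definition 1.5 warns that $v_0\neq v_q$ is possible), under the caustic hypothesis it always does, and this turns out to be precisely what makes $F_{p/q}$ constant.

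Fix $s\in\T$. Since $\Gamma_{p/q}$ is a rational caustic, the $\phi$-orbit through the phase point $(s,g_{p/q}(s))$ is periodic with rotation number $p/q$, so in particular $v_q=v_0$. Lifting this orbit to the universal cover yields a configuration $X^*(s)=(x_0^*,\dots,x_q^*)\in C_{p/q}(s)$ with $x_0^*=s$ and $x_q^*=s+p$. By the cited theorem of Mather, this orbit is minimizing in the variational sense, so $X^*(s)$ attains the infimum in (\ref{per-action}) and
$$F_{p/q}(s)=\sum_{i=0}^{q-1}h(x_i^*(s),x_{i+1}^*(s)).$$

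Next, differentiate in $s$. The intermediate variables $x_1^*,\dots,x_{q-1}^*$ satisfy the Euler--Lagrange system (\ref{e-l}), so by the envelope theorem their contributions drop out. Only the endpoint terms survive; since $dx_0^*/ds=dx_q^*/ds=1$, formula (\ref{p-h}) yields
$$\frac{dF_{p/q}}{ds}(s)=\partial_1 h(x_0^*,x_1^*)+\partial_2 h(x_{q-1}^*,x_q^*)=\cos v_0-\cos v_q,$$
which vanishes by the caustic condition $v_0=v_q$. Hence $F_{p/q}$ is constant on $\T$.

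The only mild technical obstacle is justifying the envelope calculation, which formally requires $X^*(s)$ to depend $C^1$ on $s$. This is supplied by the twist hypothesis $-\partial_{12}h>0$: the Hessian of the discrete action with respect to the free variables $x_1,\dots,x_{q-1}$ is positive definite, so the implicit function theorem applied to (\ref{e-l}) yields smooth dependence of $X^*$ on $s$ along the caustic. Alternatively, one can avoid the regularity issue entirely by noting that the boundary expression $\cos v_0(\sigma)-\cos v_q(\sigma)$ is identically zero for $\sigma$ ranging over $\T$ on the caustic, and telescoping $F_{p/q}(s')-F_{p/q}(s)$ through this vanishing integrand directly.
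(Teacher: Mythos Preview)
Your argument is correct and is exactly the elaboration the paper leaves to the reader: the paper offers no proof beyond the phrase ``by applying this Theorem to the billiard map,'' while you supply the envelope computation $\tfrac{d}{ds}F_{p/q}(s)=\cos v_0-\cos v_q=0$ that makes the deduction explicit.

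One minor correction on the regularity paragraph: the twist condition $-\partial_{12}h>0$ by itself does \emph{not} make the Hessian of the discrete action in $(x_1,\dots,x_{q-1})$ positive definite---it only fixes the sign of the off-diagonal entries of that tridiagonal matrix. The smooth dependence you need is better justified either by the fact that $X^*(s)$ is obtained by iterating the smooth map $\phi$ from the Lipschitz initial data $(s,g_{p/q}(s))$, or simply by your own alternative in the last sentence (Lipschitz $F_{p/q}$, a.e.\ differentiability, vanishing derivative).
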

\begin{rmk}


In \cite{GK}, the authors defined the {\bf width function $w(\alpha)$} of a convex billiard curve by the strip width formed by the tangent rays with the direction $\alpha+\pi/2$ and $\alpha-\pi/2$, for any unit vector $\alpha$ in $\R^2$. We called the billiard boundary {\bf of constant width}, if $w(\alpha)$ is a constant. Observe that billiard boundaries of constant width has $1/2$ caustic.

The circle is a trivial example of constant width, of which the $1/2$ caustic is the centre point. A popular non trivial example is the Releaux triangle, see \cite{YB}. 

We should remind the readers that elliptic boundary has no $1/2$ caustic. Indeed, the elliptic billiard has a first integral, see \cite{PR}, but the $1/2$ periodic orbits just correspond to the rebound along the major (reps. minor) axis, which have separatrix arcs containing homoclinic orbits but not periodic ones.
 \end{rmk}
 \vspace{10pt}
\noindent{\bf Organization of this article.} In Section \ref{1} we state the rigid constraints on the boundaries to preserve certain rational caustics; we introduce the Projection Theorem and our main conclusion towards it; In Section \ref{2} we get the $1^{st}$ and $2^{nd}$ order estimate of the action function, which leads to our harmonic equation; In Section \ref{3} we analysis the harmonic equation and give the proof of our main conclusion. In Section \ref{4} we make some heuristic comments and further generalizations of this direction. \\

\section{\sc Main conclusion and Scheme of the proof}\label{1}\vspace{10pt}

Based on previous section's setting, a natural question can be asked: How far can we constraint the boundary to preserve certain rational caustics?\\

Recently, Avila, de Simoi and Kaloshin proved the following result:
\begin{thm}\cite{ADK}
There exist $e_0 > 0$ and $\eps_0 > 0$ such that for any $0 \leq e \leq e_0$, $0 \leq \eps < \eps_0$, any rationally integrable $C^{39}-$smooth domain $\Om$ so that $\partial\Om$ is $C^{39}$ $\eps-$close to the ellipse $\cE_e$ with the eccentricity $e$ is also an ellipse.
\end{thm}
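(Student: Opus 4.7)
The plan is to recast the problem as an infinitesimal rigidity statement for normal deformations of $\cE_e$ and then promote it to a finite rigidity via a Nash--Moser iteration. I would parametrise $\partial\Om$ as a normal graph over $\cE_e$ by a small function $\eta\in C^{39}(\T)$ and, using the corollary above together with rational integrability, encode the hypothesis as the family of identities
\[
F_{p/q}^{\cE_e+\eta}(s)\equiv\mathrm{const}(p/q)\quad\text{on }\T,\qquad \forall\; p/q\in(0,1/2]\cap\Q.
\]
Expanding each $F_{p/q}$ to first order in $\eta$ via the generating function $h$, and using that on $\cE_e$ every $p/q$-orbit lies on an explicit confocal caustic uniformised by an elliptic-angle variable, would yield a sequence of linear conditions $L_{p/q}\eta\equiv 0$, one per rational rotation number.

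The first main step would be infinitesimal rigidity: show that $\bigcap_{p/q}\ker L_{p/q}$ consists only of infinitesimal affine deformations of $\cE_e$. Via the elliptic uniformisation, each equation $L_{p/q}\eta=0$ can be rewritten as the vanishing of a weighted trigonometric sum in generalised Fourier coefficients of $\eta$, the weights being evaluations of the elliptic first integral at the $p/q$-orbit. Letting $p/q$ range over a dense subset of $(0,1/2)$, one would try to show that these conditions exhaust all non-affine modes. The smallness restriction $0\le e\le e_0$ enters precisely here, guaranteeing quantitative non-degeneracy of the coefficient matrices and ruling out the low-order resonances that could enlarge the kernel.

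The second step would be a Nash--Moser scheme on $C^k(\T)$: linearise the map $\eta\mapsto\{F_{p/q}^{\cE_e+\eta}-\mathrm{const}\}_{p/q}$ at each approximate solution, invert it modulo the finite-dimensional affine obstruction (incurring a fixed loss of derivatives) and iterate. For $\eps<\eps_0$ this should converge to an affine transformation sending $\partial\Om$ onto $\cE_e$; the threshold $C^{39}$ would be dictated by the total loss of derivatives across the scheme plus the regularity required to perform and control the first-order expansion of $F_{p/q}$ uniformly in $p/q$.

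The hard part will be the infinitesimal rigidity. The operators $L_{p/q}$ degenerate as $p/q\to0$ or $p/q\to 1/2$, and their weights involve elliptic functions whose Fourier content decays rapidly as the modulus tends to $0$; extracting enough genuinely independent linear conditions from the intermediate rationals, before that decay hides the relevant information, will require delicate quantitative estimates. This is the chief reason both for the small-$e$ hypothesis and for the need to assume rational caustics for a dense collection of rotation numbers rather than finitely many.
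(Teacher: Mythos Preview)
The paper does not prove this theorem; it is quoted verbatim from \cite{ADK} and no argument is given here beyond the citation. So there is nothing in the present paper to compare your proposal against.

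As for the relation to the actual proof in \cite{ADK}: your first step is in the right spirit---one does parametrise $\partial\Om$ as a graph over the ellipse, linearise the constraint that every $F_{1/q}$ be constant, and show that the resulting system of linear conditions on the Fourier coefficients of the deformation kills all non-elliptic modes. The paper even hints at this in the remark following Theorem~\ref{r-r}: the difficulty is that the infinitesimal threshold $\eps_0(q)$ collapses as $q\to\infty$, and the work is to obtain a uniform $\eps_0$. Your second step, however, is not what \cite{ADK} does. There is no Nash--Moser iteration; instead, the argument is a direct quantitative sharpening of the Ram\'irez-Ros linear analysis. For each $q$ one controls a block of Fourier modes of the deformation (those with index near multiples of $q$), and the small-eccentricity hypothesis keeps the problem close enough to the circular case that these blocks can be solved successively with uniform bounds, without any iterative loss-of-derivatives machinery. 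The regularity threshold $C^{39}$ comes from the number of derivatives needed to control the remainder in the expansion of $F_{1/q}$ uniformly in $q$, not from a Nash--Moser tame estimate. Your identification of the degeneration as $p/q\to 0$ as the main enemy is correct, but the cure in \cite{ADK} is an explicit perturbative estimate rather than an implicit-function scheme.
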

This implies that locally ellipse with small eccentricity is the only possible integrable convex billiard boundaries. The proof of this Theorem relies on an improved $1^{st}$ order estimate of the action function. To help the readers to get a clearer understanding of that, let us start by exploring the integrable infinitesimal deformations of a circle. Suppose $\Om_0$ be the unit disk, and the polar coordinates on the plane be $(r, \theta)$. Let $\Om_\eps$ be a one-parameter family of deformations given by 
\[
\partial\Omega_\eps=\{r  = 1 + \eps  n(\theta) +O(\eps^2) \},
\]
where the Fourier expansion of $n$:
\[
n(\theta)=n_0+\sum_{k>0}n'_k\sin k\theta+n''_k\cos k\theta.
\]
The following Theorem is available:
\begin{thm}[Ramirez-Ros \cite{R}]\label{r-r}
If $\Om_\eps$ has an integrable rational caustic $\Gamma_{1/q}$ of rotation number $1/q$ for all sufficiently small $\eps$, then $n'_{kq}=n''_{kq}=0$ for all $k\in\N$.
\end{thm}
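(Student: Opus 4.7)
The plan is to exploit Corollary 0.5: the hypothesis that $\Om_\eps$ admits a $1/q$-rational caustic for all sufficiently small $\eps$ forces the action $F_{1/q}(\theta;\eps)$, viewed in the polar-angle coordinate compatible with $\partial\Om_\eps=\{r=1+\eps n(\theta)+O(\eps^2)\}$, to be constant in $\theta$ for every such $\eps$. Expanding
\[
F_{1/q}(\theta;\eps)=F_0+\eps F_1(\theta)+O(\eps^2),
\]
each Taylor coefficient must itself be constant in $\theta$. I will show that constancy of the first-order term $F_1$ alone already forces $n'_{kq}=n''_{kq}=0$ for every $k\in\N$.

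The starting point is the unperturbed minimizer: for $\Om_0$ the unit disk, the infimum in $C_{1/q}(\theta)$ is realized by the regular $q$-gon inscribed in the unit circle, with vertices at angles $\theta_j:=\theta+2\pi j/q$, and the action equals $-2q\sin(\pi/q)$ (independent of $\theta$, so $F_0$ is trivially constant). Strict convexity of $\Om_0$ together with the twist property \eqref{p-h} makes this minimizer nondegenerate, so by the implicit function theorem the minimizing configuration of $F_{1/q}(\theta;\eps)$ varies smoothly with $\eps$, and the envelope principle gives that $F_1(\theta)$ equals the $\eps$-derivative of $\sum_j h_\eps(\gamma_j,\gamma_{j+1})$ evaluated with the vertices frozen at the unperturbed angles $\theta_j$.

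Differentiating each chord length $|P_{j+1}-P_j|$ along the radial perturbation $P_j=(1+\eps n(\theta_j))(\cos\theta_j,\sin\theta_j)$, and using $|P_{j+1}-P_j|^2=r_j^2+r_{j+1}^2-2r_jr_{j+1}\cos(\theta_{j+1}-\theta_j)$ at $r_j=1$, $\theta_{j+1}-\theta_j=2\pi/q$, the symmetric regular-polygon geometry yields $\partial_\eps|P_{j+1}-P_j|\big|_0=\sin(\pi/q)\bigl(n(\theta_j)+n(\theta_{j+1})\bigr)$. Summing over the $q$ edges,
\[
F_1(\theta)=-2\sin(\pi/q)\sum_{j=0}^{q-1}n(\theta+2\pi j/q).
\]
Inserting the Fourier series of $n$ and using the orthogonality $\sum_{j=0}^{q-1}\cos(2\pi k j/q)=q$ if $q\mid k$ and $0$ otherwise (with the analogous vanishing of the sine sum), the right-hand side reduces to
\[
qn_0+q\sum_{k\geq 1}\bigl(n'_{kq}\sin(kq\theta)+n''_{kq}\cos(kq\theta)\bigr),
\]
and requiring this to be constant in $\theta$ forces $n'_{kq}=n''_{kq}=0$ for all $k\geq 1$, which is the claim.

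The only delicate step is the envelope argument: one must verify that the Hessian of $\sum_j h(\gamma_j,\gamma_{j+1})$ at the regular $q$-gon is nonsingular, so that the implicit function theorem provides smooth $\eps$-dependence of the minimizer and the derivative of the min reduces to the partial derivative at the frozen configuration. This follows from strict negativity of $\partial_{12}h$ (the twist condition \eqref{p-h}) combined with the standard discrete second-variation identity on the circle; granted that, the remainder is direct calculus and elementary Fourier analysis.
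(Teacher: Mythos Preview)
Your argument is correct. The paper itself does not give a proof of this theorem---it is quoted as a result of Ram\'irez-Ros \cite{R}---so there is no ``paper's own proof'' to compare against directly. That said, your computation is exactly the first-order expansion the paper later carries out in Section~\ref{2} for its own purposes (Lemma~\ref{1st-order}): expanding the $q$-perimeter action about the regular $q$-gon, the $O(\eps)$ term is $2q\sin(\pi/q)\,n^{(q)}(\theta)$, and constancy in $\theta$ kills the $kq$-harmonics of $n$. So your approach and the paper's machinery coincide.

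One minor remark: the Hessian nondegeneracy you flag at the end is not actually needed for the first-order conclusion. The envelope theorem for differentiating an extremum only requires that the interior vertices $\theta_1,\dots,\theta_{q-1}$ satisfy the Euler--Lagrange equations \eqref{e-l}, which they do by construction; smoothness of the minimizer in $\eps$ is relevant only for the second-order analysis. Your caution is harmless but unnecessary here.
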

If we impose that $\Om_\eps$ is rationally integrable for all $1/q$ with $q>2$ and sufficiently small $\eps$, then the above theorem implies that $n'_k=n''_k=0$ for all $k>2$, i.e. we establish $n(\theta)$ by:
\be\label{ellip}
n(\theta)&=&n_0 +n'_1\cos\theta+n''_1\sin\theta+n'_2\cos2\theta+n''_2\sin2\theta\nonumber\\
&=&n_0 +n^*_1\cos(\theta-\theta_1)+n^*_2\cos(\theta-\theta_2).
\ee
\begin{rmk}
As in \cite{ADK}, we can find that in (\ref{ellip}) $n_0$ corresponds to a homothety, $n_1^*$ corresponds to a shift in the direction $\theta_1$ and $n_2^*$ corresponds to a deformation into an ellipse of small eccentricity with the major axis coincides with the direction $\theta_2$.

It's remarkable that in the above theorem, one have to face $\eps\rightarrow 0$ as $q\rightarrow\infty$. So they need to replace the infinitesimal deformation by a fixed lower bound such that $\eps>\eps_0$; You can see \cite{ADK} for more technical details.
\end{rmk}

A natural generalization of previous Theorem is reducing to `finitely many' rational caustics, based on which we can still get the integrability of the billiard maps:\\

\noindent{\bf The Projected Conjecture.}
In a $C^r$ ($r=2,\cdots,\infty,w$) neighborhood of the circle there is no other billiard domain
of constant width and preserving $1/3$ caustics.\\

\begin{defn}
Denote by $BZ_q$ the manifold of codimension
infinity containing all the strictly convex billiard boundaries preserving the $1/q$ caustic $\Gamma_q$, $q\geq 2$.
\end{defn}
Here we propose a possible approach to prove it, still we start from the infinitesimal deformation of the circle:\\

{\bf Step 1.} Find the tangent bundle of $BZ_q$ at the circle. Let $n(\theta)$ and $m(\theta)$ be functions with $\theta\in\T^1=\R/[0,2\pi]$ given by the Fourier series
\[
n(\theta) = \sum_{k\in \Z} n_k \exp i(k\theta) \qquad , \qquad 
m(\theta) = \sum_{k\in \Z} m_k \exp i(k\theta).
\]
In the polar angle coordinate, for sufficiently small $\eps$ the perturbed domain can be denoted by
\[
\partial\Omega_\eps=\{r  = 1 + \eps  n(\theta) +\eps^2 m(\theta)+O(\eps^3) \} \qquad (*)
\] 
%

Denote by $T_q$ the tangent space of $BZ_q$ at the circle, and $T_q^\perp$ be the orthogonal complementary of $T_q$. Due to Theorem \ref{r-r}, $T_q$ consists of functions given by Fourier coefficients whose indices are not divisible by $q$. \\

{\bf Step 2.} Describe $BZ_q$ as a graph over $T_q$. Namely, $BZ_q = \{ (n,F_q(n)) \in T_q \times T^\perp_q\}$.

\

{\bf Step 3.} Show that for all
$n \in T_2 \cap T_3\setminus 0$, there exists a upper bound $\eps_0=\eps_0(n)$\, such that for all $0<\eps\leq\eps_0$, we have $F_2(\eps n)\ne F_3(\eps n)$.
In particular, it implies the Projected Conjecture.

\begin{rmk}
Notice that $\eps_0(n)$ may be not uniform for different $n\in C^r(\T,\R)$, even though $\|n\|_{C^r}=1$ is imposed. So we need a similar approach as \cite{ADK} to avoid the the collapse of the infinitesimal $\eps_0(\cdot)$. To keep the consistency and readability, we don't consider this part in this paper.
\end{rmk}
\vspace{10pt}

Following aforementioned strategy, we claim our main conclusion. Before we doing that, let's formalize the symbol system first:\\
\begin{itemize}
\item $\vec{\gamma}_0\in\R^2$ be the unit circle and $\vec{\gamma}_\eps\in\R^2$ be a deformation, and in the polar coordinate $r_0$, $r_\eps$ represent the corresponding axis length. \\
\item $\{z_k^0\in\Omega_0\}$, $\{z_k^\eps\in\Omega_\eps\}$ be the configuration of $q-$periodic, $k=0,1,\cdots,q-1$.\\
\item $s\in[0,2\pi]$ be the arc length  variable and $\theta\in[0,2\pi]$ be the rotational angle; \\
\item Suppose $\gamma_t(\theta)$ is a curve of strictly convex boundaries in $\R^2$ with the parameter $t\in[0,1]$ and starting from the circle, i.e. $\gamma_0(\theta)=1$. If $\gamma_t(\theta)$ is $C^3$ smooth of $t$, then we can expand the curve in the polar coordinate by
\be
r_t(\theta)=1+t n(\theta)+ t^2 m(\theta)+O(t^3),\quad t\ll1.
\ee
We can assume $n,m\in C^w(\T,\R,\rho)$, $\rho>0$. Moreover, by rescaling $t$, we can make
\be\label{rescale}
|n(\theta)|_{C^0}=1,\;  |m(\theta)|_{C^0}\leq C.
\ee
\item Recall that two different deformation $\gamma_t$ and $\gamma'_t$ may be homogenous by a rigid transformation on $\R^2$ space (parallel shift, rotation), so we just need to choose a representation by imposing
\be\label{rigid}
n(0)=0, n'(0)=0.
\ee
\item Besides, we can fix the perimeter by $2\pi$ for all the deformations, i.e.
\be\label{flux}
\int_\T n(\theta)d\theta=0.
\ee
\end{itemize}
\begin{thm}[Main Conclusion]
For non degenerate deformation of circle which can be formalized by $(*)$ and satisfying (\ref{rescale}), (\ref{rigid}) and (\ref{flux}), if $n(\theta)$ is an even analytic function with super exponential decaying rate, i.e.
\[
\lim_{i\rightarrow\infty}\frac{w(i)}{2^i}=\infty
\]
with the modular function $w:\Z\rightarrow \R^+$ satisfying 
\[
w(-i)=w(i),
\]
\[
n(\theta)=\sum_{k\in\Z}n_k e^{ik\theta}, \quad |n_k|\leq e^{-w(k)}
\]
and there exists a subsequence $\{k_i\}_{i=1}^\infty$, such that
\[
\lim_{i\rightarrow\infty}\frac{|n_{k_i}|}{e^{-w(k_i)}}<\infty,
\]
 then there exists $\eps_0(n)$ such that for all $0<\eps\leq\eps_0$, the deformed boundary couldn't persist the coexistence of $1/2,1/3$-caustics.
\end{thm}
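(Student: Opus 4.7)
The plan is to Taylor-expand $F_{1/q}(s,\eps)$ in $\eps$ for $q\in\{2,3\}$, invoke the corollary above (which forces $F_{1/q}(\cdot,\eps)\equiv$~const whenever the $1/q$-caustic persists), and distill order-by-order Fourier constraints on $n$ and $m$. At first order, the envelope theorem applied at the regular inscribed $q$-gon yields $F^{(1)}_q(s) = -2\sin(\pi/q)\sum_{j=0}^{q-1} n(s + 2\pi j/q) = -2q\sin(\pi/q)\sum_{k\in q\Z} n_k e^{iks}$, so demanding $F^{(1)}_q\equiv$~const recovers Theorem~\ref{r-r}: $n_k=0$ for all $k\in q\Z\setminus\{0\}$. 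Imposing this for $q=2$ and $q=3$ together with evenness and (\ref{flux}) confines the Fourier support of $n$ to indices $k\in\Z$ coprime to $6$ (in particular all odd).

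At second order, a careful expansion---accounting for (a) the linear contribution of $m$, (b) the quadratic-in-$\eps n$ correction to each chord length, and (c) the $O(\eps)$ shift $\tilde\delta_j(s)$ of the critical $q$-gon vertices, determined by a circulant linear system coming from the first-order Euler--Lagrange equations---produces on the Fourier side
\begin{equation*}
\hat L_q(K)\,m_K + \widehat{Q_q[n]}(K) = 0, \qquad K\neq 0,
\end{equation*}
where $\hat L_q(K) = -2q\sin(\pi/q)$ if $q\mid K$ and $\hat L_q(K)=0$ otherwise, and $\widehat{Q_q[n]}(K) = \sum_{j+l=K} c_q(j,l)\,n_j n_l$ is an explicit convolution-type bilinear form. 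Since $n$ lives on odd indices, $\widehat{Q_q[n]}(K)$ automatically vanishes for odd $K$; the substantive outputs are (I) $\widehat{Q_3[n]}(K)=0$ for every $K\in 2\Z\setminus 6\Z$ (from the $q=3$ equation when $3\nmid K$), and (II) the compatibility $\hat L_3(K)\,\widehat{Q_2[n]}(K) = \hat L_2(K)\,\widehat{Q_3[n]}(K)$ for every $K\in 6\Z\setminus\{0\}$, which expresses that both equations must give the same $m_K$ there. Together (I)--(II) constitute the harmonic equations.

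To finish I would feed the decay hypothesis into (I)--(II). Each reads $\sum_{j+l=K}\Psi(j,l,K)\,n_j n_l=0$, summed over admissible $(j,l)$ coprime to $6$. Under super-exponential growth of $w$, such a convolution is dominated by the pair $(j_*,K-j_*)$ that minimizes $w(j)+w(K-j)$; since $K/2\in 3\Z$ is excluded from the support of $n$, $j_*$ is a specific index close to $K/2$ but coprime to $6$. Isolating this term against the other contributions---which the hypothesis $w(i)/2^i\to\infty$ drives to be $o(e^{-w(j_*)-w(K-j_*)})$---yields an effective bound $|n_{k_i}|=o(e^{-w(k_i)})$ along the saturating subsequence, contradicting the non-degeneracy assumption $\liminf_i |n_{k_i}|\,e^{w(k_i)}>0$. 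Hence $n\equiv 0$, ruling out any nontrivial deformation that preserves both caustics.

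The principal obstacle is the second-order computation underlying step (c): inverting the $q\times q$ circulant Hessian to obtain $\tilde\delta_j(s)$ in closed Fourier form, and thereby identifying the kernels $c_q(j,l)$ explicitly. Equally delicate is verifying that the combined kernel $\Psi(j_*,K-j_*,K)$ appearing in (I)--(II) does not vanish along the dominant pair for infinitely many $K$; without this algebraic non-degeneracy of the harmonic equations, they could admit nontrivial solutions and the contradiction above would collapse. The preceding Ramirez--Ros step and the generalities of the Fourier reduction follow the templates of \cite{ADK,R}, but the precise bookkeeping of (b)--(c), which controls the weights $\Psi$, is the main technical burden of the paper.
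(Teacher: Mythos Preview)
Your first- and second-order expansion is essentially what the paper does in Section~\ref{2}: the leading term reproduces Theorem~\ref{r-r}, and the $\eps^2$ coefficient splits into a linear piece $2q\sin(\pi/q)\,m^{(q)}(\theta)$ and a bilinear remainder $D_q(\theta)$ in $n$ (Lemma~\ref{general-action}). The paper also computes the vertex shifts $\theta_k^1$ explicitly via the tridiagonal system (\ref{triple}), which is exactly your step (c). So far this matches.

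Two points diverge. First, your constraint (I) is vacuous: the paper shows (the Lemma immediately following (\ref{pyramid-dia})) that the conditions $D^{(3)}_{3l+1}=D^{(3)}_{3l+2}=0$ hold \emph{automatically} once $n\in T_2\cap T_3$, as do the analogous $D^{(2)}$ conditions. Only your (II)---the compatibility $4D^{(2)}_{6l}=3\sqrt3\,D^{(3)}_{6l}$, the paper's $(\diamondsuit)$---carries information.

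Second, and more seriously, your final step does not close. Fixing $K\in6\Z$ and isolating the pair $(j_*,K-j_*)$ minimizing $w(j)+w(K-j)$ bounds only the \emph{product} $|n_{j_*}n_{K-j_*}|$; you cannot infer smallness of either factor without a lower bound on the other. In particular, if $k_i$ lies in your saturating subsequence there is no reason $K-k_i$ does, so the equation at $K\approx 2k_i$ yields no contradiction. Your sketch also gives no indication why the specific threshold $w(i)/2^i\to\infty$ should enter.

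The paper's mechanism is different. It truncates at a scale $L$ from the saturating subsequence and exploits the \emph{pyramid structure} of $(\diamondsuit)$ (Lemma~\ref{pyramid}): at the top level $|K|=2P$ (where $L=6P+1$) there is a \emph{single} admissible pair, so that product is $\lesssim\mathcal E\sim L^2 e^{-w(L)}$. One then descends level by level, each product at level $K$ being controlled by those at level $K+1$ through the recursion $\Delta_K\le L^3\sqrt{\Delta_{K+1}}$ (Lemma~\ref{super-ex}), whence $\Delta_K\lesssim e^{-w(L)/2^{2P-|K|}}$. The hypothesis $w(i)/2^i\to\infty$ is exactly what keeps this bound nontrivial at the bottom of the pyramid. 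The conclusion is that \emph{all} cross products $n_k n_l$ with $k\equiv -l\pmod 6$ are tiny, which forces at most one opposite pair $(n_a,n_{-a})$ to be non-negligible; evenness $n_a=n_{-a}$ together with $n(0)=0$ from (\ref{rigid}) then kills that pair too, contradicting $|n|_{C^0}=1$. Neither this top-down cascade nor the concluding ``single opposite pair'' argument using (\ref{rigid}) appears in your proposal.
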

\begin{proof}
This conclusion is proved in Section \ref{3}, Theorem \ref{P-A}.
\end{proof}
\begin{cor}[Trigonometric Polynomial]
For any non degenerate deformation of the circle which can be formalized by $(*)$ and satisfying (\ref{rescale}), (\ref{rigid}) and (\ref{flux}), if $n(\theta)$ is an even trigonometric polynomial, then there exists $\eps_0(n)$ such that for all $0<\eps\leq\eps_0$, the deformed boundary couldn't persist the coexistence of $1/2,1/3$-caustics.
\end{cor}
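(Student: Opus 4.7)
The plan is to obtain the Corollary as a direct specialization of the Main Conclusion (Theorem~\ref{P-A}). Any even trigonometric polynomial $n(\theta) = \sum_{|k| \leq N} n_k e^{ik\theta}$ with $n_{-k}=n_k\in\R$ is automatically real-analytic and has only finitely many nonzero Fourier coefficients, so the super-exponential decay profile required in Theorem~\ref{P-A} should be attainable with a suitable choice of modular function $w$. Once the hypotheses of the Main Conclusion are verified for such $n$, the existence of $\eps_0(n) > 0$ beyond which coexistence of $1/2$- and $1/3$-caustics fails is inherited immediately.

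The first concrete step is to exhibit an even, super-exponentially growing $w:\Z\to\R^+$ that dominates the Fourier tail of a given $n$. I would take $w(i) := |i|\cdot 2^{|i|} + C_n$, with $C_n$ chosen so that $e^{-w(k)} \geq |n_k|$ for every $|k| \leq N$; for $|k| > N$ the inequality is trivial since $n_k = 0$. The required growth condition $\lim_{i\to\infty} w(i)/2^i = \infty$ then follows from the $|i|\cdot 2^{|i|}$ term, and the symmetry $w(-i) = w(i)$ is manifest. For the subsequence condition, I would simply take any $k_i \to \infty$ with $|k_i| > N$; since $n_{k_i} = 0$, the ratio $|n_{k_i}|/e^{-w(k_i)}$ is identically $0$ along this sequence, which literally satisfies the stated $\lim<\infty$ clause.

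The remaining hypotheses are routine: the normalizations \eqref{rescale}, \eqref{rigid}, \eqref{flux} are not genuine restrictions on the geometric class of deformations, since they can be arranged by rescaling $\eps$ and applying a rigid motion of $\R^2$ before parametrizing the boundary; the evenness of $n$ is a direct hypothesis of the Corollary; and non-degeneracy, $n \not\equiv 0$, is built in as well. With all hypotheses verified, Theorem~\ref{P-A} supplies the desired $\eps_0(n)$ and the Corollary follows.

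The step I expect to require the most care is the interpretation of the subsequence condition. The Corollary has genuine content only if the argument in Section~\ref{3} actually exploits that subsequence to build a contradiction, rather than invoking it as a mere non-vacuity clause. If the analysis of the harmonic equation in Section~\ref{3} derives its contradiction from a mode $k_i$ where $|n_{k_i}|$ nearly saturates the bound $e^{-w(k_i)}$, then padding with zero modes will not suffice, and instead one should tune $w$ to be very large outside $[-N, N]$ while calibrated to the actual sizes of the $n_k$ inside, so that saturation occurs along a genuine subsequence of nonzero modes within $\{|k|\leq N\}$. For a trigonometric polynomial this calibration is manageable because there are finitely many candidate modes, but it is precisely the point where I would reread Theorem~\ref{P-A}'s proof most carefully to confirm that the implication goes through without the subsequence having to consist of infinitely many distinct nonzero frequencies.
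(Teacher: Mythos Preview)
Your route is logically sound but runs opposite to the paper's. You derive the polynomial case by specializing the analytic Main Conclusion; the paper instead establishes the polynomial case \emph{directly} via Theorem~\ref{P-A} (Polynomial Avalanche) and only afterwards generalizes to the analytic setting in Theorem~\ref{A-A}. (The cross-references in the introduction appear swapped---the Corollary's proof line cites~\ref{A-A} and the Main Conclusion's cites~\ref{P-A}---but by content the polynomial argument is Theorem~\ref{P-A}.) The direct proof is short and avoids all the modular-function machinery: for a trigonometric polynomial $n\in T_2\cap T_3$ the harmonic constraints $(\diamondsuit)$ are \emph{exact}, and the top level of the pyramid forces $n_{6p+1}\,n_{6q-1}=0$ for all admissible $p,q$; from this one shows that at most one opposite pair $(-a,a)$ can have $n_{-a}n_a\neq 0$, and then evenness together with \eqref{rigid} kills that pair too, contradicting $|n|_{C^0}=1$. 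Your suspicion about the subsequence clause is well placed: in the proof of Theorem~\ref{A-A} one sends $L\to\infty$ along the slowest-decaying index sequence, which for a genuine polynomial is finite, so the limit step does not apply verbatim. The repair is exactly what you sketch in your last paragraph---take $L>N$ so the Fourier remainder $\cE$ vanishes and the approximate pyramid \eqref{ap-eq} becomes exact---but at that point you have simply recovered the Polynomial Avalanche argument rather than bypassed it.
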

\begin{proof}
This conclusion is proved in Section \ref{3}, Theorem \ref{A-A}.
\end{proof}
Let's define the {\bf action function} of $q-$periodic configuration by
\[
P_q(\theta, \Omega_\eps) =
\text{maximal perimeter of a } q-\text{gon inscribed into}
\]\[ \text{ the domain }\
\Omega_\eps\ \text{ starting and ending at}\ \theta,
\]
then 
\[
P_q(\theta,\Om_\eps)=-F_{1/q}(s(\theta))
\]
 if we consider the arc length variable $s$ as a function of the angle variable $\theta$. Recall that $s\in[0,2\pi]\rightarrow \theta\in[0,2\pi]$ is a diffeomorphism via the following:
\be\label{dif-arc}
s=\int_0^\theta \sqrt{\dot{r}^2_\eps(\tau)+r_\eps^2(\tau)}d\tau,
\ee
so $s(\theta+2\pi)=s(\theta)+2\pi$ can be easily achieved.
\begin{lem} \label{general-action}
Let $n \in T_q$, then the domain $\Omega_\eps$ (*) has
\be
P_q(\theta,\Omega_\eps) = P_q(\theta, \Omega_0) + \eps c+
\eps^2\Big{ [}2q\sin\frac{\pi}{q}m^{(q)}(\theta)+D_q(\theta,\eps)\Big{]},
\ee
for some $c$, where $$
m^{(q)}(\theta)=\sum_{k\in \Z} m_{kq} \exp ikq\theta
$$
is the averaging of $1/q-$frequency of $m(\theta)$.
\end{lem}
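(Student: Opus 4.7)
The plan is to expand the inscribed-perimeter functional around the regular $q$-gon in the unit disk and to extract the first and second order $\eps$-contributions by a direct envelope argument, separating out the piece controlled by the second-order Fourier data $m$.

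First I would parametrize inscribed $q$-gons by their intermediate angular coordinates. Fixing the endpoints $\theta_0=\theta$ and $\theta_q=\theta+2\pi$, define the perimeter functional
\[
\cL(\theta_1,\ldots,\theta_{q-1};\eps)=\sum_{i=0}^{q-1} d_\eps(\theta_i,\theta_{i+1}), \qquad d_\eps(\alpha,\beta)=\sqrt{r_\eps(\alpha)^2+r_\eps(\beta)^2-2 r_\eps(\alpha)r_\eps(\beta)\cos(\beta-\alpha)},
\]
so that $P_q(\theta,\Om_\eps)=\max\cL$ over the open simplex $\theta<\theta_1<\cdots<\theta_{q-1}<\theta+2\pi$. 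At $\eps=0$ the unique maximizer is the regular polygon $\theta_i^0=\theta+2\pi i/q$; its angular Hessian is negative definite on the $(q-1)$-dimensional configuration space, so the implicit function theorem produces a smooth family $\theta_i(\eps)=\theta_i^0+\eps a_i(\theta)+O(\eps^2)$ of maximizers.

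Next I would apply the first order envelope identity $\frac{d P_q}{d\eps}\big|_{0}=\partial_\eps\cL|_{\theta^0,\eps=0}$. A short computation with $r_\eps=1+\eps n+\eps^2 m+O(\eps^3)$ gives $\partial_\eps d_\eps(\alpha,\beta)|_{0}=(n(\alpha)+n(\beta))\sin\tfrac{\beta-\alpha}{2}$, so with $\beta-\alpha=2\pi/q$ on each edge the first-order contribution to $P_q$ is
\[
\eps\cdot 2\sin\tfrac{\pi}{q}\sum_{i=0}^{q-1}n\!\left(\theta+\tfrac{2\pi i}{q}\right)=\eps\cdot 2q\sin\tfrac{\pi}{q}\,n^{(q)}(\theta).
\]
Under the hypothesis $n\in T_q$ the projection $n^{(q)}$ vanishes, so this term collapses to a constant $c$ (in fact zero once (\ref{flux}) is imposed), matching the $\eps c$ piece of the statement.

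For the second order I would use the envelope formula
\[
\tfrac{d^2 P_q}{d\eps^2}\Big|_{0}=\partial_\eps^2\cL|_{0}+2\, a^\top\partial_\eps\nabla_\theta\cL|_{0}+a^\top H_0 a,\qquad H_0 a=-\partial_\eps\nabla_\theta\cL|_{0},
\]
with $H_0$ the angular Hessian at the regular polygon. The decisive observation is that, just as $r_\eps$ splits into an $n$-part and an $m$-part, $\partial_\eps^2 d_\eps|_0$ splits into a piece linear in $m$ — structurally identical to the first-order $n$-computation and contributing
\[
2\sin\tfrac{\pi}{q}\sum_{i=0}^{q-1} m\!\left(\theta+\tfrac{2\pi i}{q}\right)=2q\sin\tfrac{\pi}{q}\,m^{(q)}(\theta)
\]
after summing over the edges — plus a piece quadratic in $n$ coming from the Taylor expansion of $d_\eps$ beyond first order. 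The remaining contributions (this quadratic-in-$n$ term, the Hessian correction $a^\top H_0 a$, and the remainder from $O(\eps^3)$ corrections to $r_\eps$ and to the maximizer) all depend only on $n$ and its harmonics, and are collected into the single function $D_q(\theta,\eps)$.

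The main obstacle is the second-order envelope bookkeeping: verifying non-degeneracy of $H_0$ uniformly in $\theta$, solving the implicit function problem smoothly, and controlling the $O(\eps^3)$ remainder uniformly on $\T$ so that what is absorbed into $D_q$ really is a well-defined bounded function. Once these standard ingredients are in place the extraction of the $2q\sin(\pi/q)\,m^{(q)}(\theta)$ term is immediate, and the explicit structure of $D_q$ — which mixes $n$ with its shifts by multiples of $2\pi/q$ — is deferred to the refined analysis of Section \ref{2}.
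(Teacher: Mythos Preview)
Your argument is correct and follows the same underlying idea as the paper --- Taylor-expanding the perimeter functional about the regular $q$-gon in the unit disk to second order in $\eps$, then isolating the unique piece linear in $m$ via the discrete averaging $\sum_{i} m(\theta+2\pi i/q)=q\,m^{(q)}(\theta)$.

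The organization differs. You work in the angular coordinates $(\theta_1,\dots,\theta_{q-1})$ and invoke the envelope theorem, so that the first-order term is $\partial_\eps\cL$ evaluated at the regular polygon and the second-order term is the standard Hessian-corrected expression; the $m$-piece then drops out of $\partial_\eps^2\cL|_0$ by the same one-line chord computation you used for $n$. The paper instead expands the vertex positions $z_k^\eps=z_k^0+\eps\eta_k+\eps^2\xi_k$ in the plane, uses the Abel-summation identity $\sum e_k\cdot(\eta_k-\eta_{k+1})=\sum(e_k-e_{k-1})\cdot\eta_k$ to reduce to normal components, and solves a tridiagonal linear system (equation \eqref{triple}) for the first angular correction $\theta_k^1$. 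Your route is cleaner for the bare statement of the lemma; the paper's explicit route is what actually produces the matrices $A_{(q)},B_{(q)},C_{(q)}$ in \eqref{factors} and hence the concrete coefficients $c^{(q)}(k,l)$ needed in Section~\ref{3}. So nothing is missing from your proof of the lemma itself, but be aware that the downstream argument requires the explicit form of $D_q$, which your envelope packaging hides.
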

\begin{proof}
See section \ref{2} for details.
\end{proof}
Consider the Fourier expansion of $D_q$
\[
D_q(\theta,\eps)=\sum_{k\in \Z} D_{q,k}(\eps) \exp i\,k\theta,
\]
then the following property holds:
\begin{lem}\label{q-period}
 In the above notations, if $\gamma_\eps\in BZ_q$,
\[
D_q(\theta,\eps)=\sum_{k=mq,\ m\in \Z} D_{q,mq}(0) \exp i\,(mq\theta)+O(\eps),
\]
i.e. $D_q(\theta,\eps)$ is $1/q-$periodic of $\theta$.
\end{lem}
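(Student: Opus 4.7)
The plan is to exploit the rigidity implied by $\gamma_\eps\in BZ_q$: for every sufficiently small $\eps$, the domain $\Omega_\eps$ carries a $1/q$-rational caustic, so the corollary to Mather's theorem stated in Section~\ref{0} forces $F_{1/q}(s)\equiv\mathrm{const}$ in $s\in\T$. Transferring this through the arc-length/angle diffeomorphism~(\ref{dif-arc}), the key input becomes: $P_q(\cdot,\Omega_\eps)$ is $\theta$-independent for every sufficiently small $\eps$.

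With this in hand I would plug into Lemma~\ref{general-action} and subtract off the $\theta$-independent pieces. Since $P_q(\theta,\Omega_0)=2q\sin(\pi/q)$ (the regular inscribed $q$-gon in the unit circle) and the $\eps$-linear coefficient $c$ are both $\theta$-constant, the entire $\eps^2$ bracket must be $\theta$-constant as well; thus there exists a scalar function $g(\eps)$ such that
\[
2q\sin\tfrac{\pi}{q}\,m^{(q)}(\theta)+D_q(\theta,\eps)=g(\eps).
\]
Rearranging yields $D_q(\theta,\eps)=g(\eps)-2q\sin(\pi/q)\,m^{(q)}(\theta)$. Because $m^{(q)}$ by definition carries only Fourier modes $e^{ikq\theta}$ with $k\in\Z$, the same is immediately true of $D_q(\theta,\eps)$: every coefficient $D_{q,j}(\eps)$ with $j\not\equiv 0\pmod q$ vanishes identically, while for $m\neq 0$ the coefficient $D_{q,mq}(\eps)=-2q\sin(\pi/q)\,m_{mq}$ is independent of $\eps$; only the zero mode $D_{q,0}(\eps)$ absorbs the $\eps$-dependence through $g$.

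Putting these coefficient-level statements together, $D_q(\theta,\eps)-\sum_{m\in\Z}D_{q,mq}(0)e^{imq\theta}=g(\eps)-g(0)$, which is $O(\eps)$ as soon as $g$ is continuous at $0$. The only real obstacle is regularity bookkeeping: one has to check that the remainder $D_q(\theta,\eps)$ produced by Lemma~\ref{general-action} is well-defined and continuous in $\eps$ near $0$ (so that the scalar $g(\eps)$ itself inherits continuity), which should follow from the joint analyticity of the perimeter functional in the deformation parameter under the analytic hypotheses placed on $n$ and $m$. Once that regularity is secured, the harmonic content of the lemma reduces to the one-line Fourier separation along the sublattice $j\in q\Z$ that is forced by the scalar identity above; no orbit-level or Melnikov-type computation beyond Lemma~\ref{general-action} is needed.
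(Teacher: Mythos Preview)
Your proposal is correct and follows essentially the same route as the paper: both use that $\gamma_\eps\in BZ_q$ forces $P_q(\cdot,\Omega_\eps)$ to be constant, then read off from Lemma~\ref{general-action} that $2q\sin(\pi/q)\,m^{(q)}(\theta)+D_q(\theta,\eps)$ is constant, and conclude by noting that $m^{(q)}$ contains only harmonics divisible by $q$. You add a little more bookkeeping than the paper does in tracking the $\eps$-dependence through the scalar $g(\eps)$ to justify the $O(\eps)$ remainder, which the paper leaves implicit.
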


\begin{proof}
Since $\gamma_\eps\in BZ_q$, $P_q(\theta,\Om_\eps)$ is a constant. Due to previous Lemma, 
\[
2q\sin\frac{\pi}{q}m^{(q)}(\theta)+D_q(\theta,\eps)=const.
\]
Recall that $m^{(q)}(\theta)=\sum_{l}m_{ql}\exp({\bf i}ql\cdot \theta)$ contains only harmonics divisible by $q$, that means $D_q(\theta,\eps)$ has to contain only harmonics divisible by $q$ as well, i.e.
\[
D_q(\theta):=D_q(\theta,0)=\sum_{l\in\Z}D_{ql}\exp({\bf i}ql\cdot \theta).
\]
So $D_q(\theta)$ is $1/q-$periodic of $\theta$.
\end{proof}

%

\subsection{Obstruction to coexistence of two rational caustics}
\label{sec:obstruct}
In particular, for $q=2,3$ we want to
show that for all
$$
n\in (T_2 \cap T_3) \setminus 0
$$
the functions $\frac{3\sqrt3}{4} D_2(\theta,\eps)$ and $D_3(\theta,\eps)$ have
at least one Fourier harmonic divisible by $6$ whose coefficients are different. Once we did this, the contradiction with the action functions will lead to our main conclusion. 

\begin{cor} Once $\frac{3\sqrt3}{4}D_2(\theta)$
and $D_3(\theta)$ have a distinct Fourier harmonic whose index
is divisible by $6$, then in a $O(\eps^2)$ neighborhood of
$n(\theta)$ there is no domain having $1/2$ and $1/3$ caustic.
\end{cor}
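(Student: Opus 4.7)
The plan is to argue by contradiction. I would suppose that for arbitrarily small $\eps$ there is a second-order correction $m = m_\eps$ (uniformly bounded as $\eps \to 0$) such that the domain $\Omega_\eps$ given by $(*)$ admits both a $1/2$-caustic and a $1/3$-caustic. The first step is to apply the corollary to Mather's theorem at rotation numbers $1/2$ and $1/3$: both action functions $P_2(\theta,\Omega_\eps)$ and $P_3(\theta,\Omega_\eps)$ are then constants in $\theta$. Inserting this into Lemma \ref{general-action} and using $2q\sin(\pi/q)=4$ for $q=2$ and $3\sqrt{3}$ for $q=3$ gives the two identities
\begin{align*}
4\, m^{(2)}(\theta) + D_2(\theta,\eps) &\equiv \mathrm{const},\\
3\sqrt{3}\, m^{(3)}(\theta) + D_3(\theta,\eps) &\equiv \mathrm{const}.
\end{align*}

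The heart of the argument is then to project both identities onto the harmonics $e^{i\,6k\theta}$ with $k\ne 0$. Since $m^{(2)}$ only sees harmonics with even index and $m^{(3)}$ only those with index divisible by $3$, a harmonic divisible by $6$ appears in both averagings and is simply equal to $m_{6k}$. Matching Fourier coefficients thus produces two determinations
\[
m_{6k} \;=\; -\tfrac{1}{4}\, D_{2,6k}(\eps) \;=\; -\tfrac{1}{3\sqrt{3}}\, D_{3,6k}(\eps),
\]
so compatibility forces $\tfrac{3\sqrt{3}}{4}\, D_{2,6k}(\eps) = D_{3,6k}(\eps)$ for every nonzero multiple of $6$. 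By Lemma \ref{q-period} one has $D_{q,6k}(\eps) = D_{q,6k}(0) + O(\eps)$, where $D_{q,6k}(0)$ depends only on $n$. Under the hypothesis of the corollary the leading-order discrepancy $D_{3,6k_0}(0) - \tfrac{3\sqrt{3}}{4}\, D_{2,6k_0}(0)$ is a nonzero constant at some $k_0$, so the compatibility condition cannot hold for $\eps$ sufficiently small, yielding the desired contradiction.

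The main obstacle, as I see it, is not the algebraic structure itself — which amounts to a one-line over-determination argument showing that preserving both caustics overdetermines each $m_{6k}$ with $k\ne 0$ — but rather the uniformity of the remainder in Lemma \ref{q-period}. To make precise the phrase ``$O(\eps^2)$ neighborhood of $n(\theta)$'', the $O(\eps)$ error in $D_{q,6k_0}(\eps)$ must be controlled uniformly across all admissible bounded second-order corrections $m_\eps$ and uniformly at the fixed harmonic index $6k_0$. Both uniformities should be readable off the explicit asymptotic expansion carried out in Section \ref{2}, but they need to be checked carefully once that expansion is in hand.
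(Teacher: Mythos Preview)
Your argument is correct and follows essentially the same route as the paper: from the constancy of $P_2$ and $P_3$ one obtains $D_2(\theta)+4m^{(2)}(\theta)=\text{const}$ and $D_3(\theta)+3\sqrt{3}\,m^{(3)}(\theta)=\text{const}$, and then eliminating the common harmonics of $m$ (your coefficient matching at indices $6k$, the paper's ``averaging'' yielding $\tfrac{3\sqrt3}{4}D_2^{(3)}=D_3^{(2)}$) forces the compatibility condition whose failure gives the contradiction. Your version is somewhat more explicit about the passage from $D_q(\theta,\eps)$ to $D_q(\theta,0)$ and the attendant uniformity in $m$, which the paper leaves implicit in the $O(\eps^3)$ remainder.
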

\begin{proof}
Indeed,
\[
P_q (\theta,\Omega_\eps)=P_q(\theta,\Om_0)+c\eps + \eps^2 \left(D_q(\theta) 
+2q\sin \frac{\pi}{q}\,m^{(q)}(\theta)\right)+O(\eps^3)
\]
\[
P_p (\theta,\Omega_\eps)=P_p(\theta,\Om_0)+c\eps + \eps^2 \left(D_p(\theta)
+2p\sin \frac{\pi}{p}\,m^{(p)}(\theta)\right) +O(\eps^3).
\]
If we take $q=2$ and $p=3$, then the necessary condition to preserve $1/2$, $1/3$ caustics is that $P_2$, $P_3$ should be both constant, which leads to 
\[
D_2(\theta) 
+4\,m^{(2)}(\theta)=const,
\]
\[
D_3(\theta)
+3\sqrt 3\,m^{(3)}(\theta)=const.
\]
Once again we can take the averaging and get 
\[
\frac{3\sqrt3}{4}D_2^{(3)}(\theta)=D_3^{(2)}(\theta).
\]
This is a necessary condition for boundary $\Omega_\eps$ to preserve both $1/2$ and $1/3$ caustics.
\end{proof}
The Fourier coefficients of $D_q(\theta)$ are obtained through
a convolution of Fourier coefficients of $n$. We will compute
the corresponding formula in the next section.

\section{Evaluation of $P_q$ action function}\label{2}

Let's start with the circle $\Omega_0$ and the $q$-gon for some $q\ge 2$. Suppose the deformation in the polar coordinate has the form 
\[
\gamma_\eps =\{ r = 1+ \eps n(\theta)+\eps^2 m(\theta),\ \theta\in \T\}
\]
for small $\eps$ and 
$$
n(\theta)=\sum_{k\in \Z} n_k \exp 2\pi i k\cdot \theta\quad \&
\quad m(\theta)=\sum_{k\in \Z} m_k \exp 2\pi i k\cdot \theta.
$$
Consider a $q$-perimeter function $P_\eps (\theta,\Omega_\eps)$
as defined above. For $\eps=0$ we have
$$P_q(\theta,\Om_0)\equiv 2q\sin \frac{\pi}{q}.$$

Let $(z^0_0,\dots,z^0_{q-1})$ be the right $q$-gon, i.e. $\theta^0_k=\theta^0_0+2k\pi/q$.
For small $\eps$ we compute
\[
z_k^\eps = z_k^0+\eps \eta_k + \eps^2 \xi_k+O(\eps^3),\qquad
k=0,\dots,q-1.
\]
We postpone the computation of $\eta_k$ and $\xi_k$. Consider the $k$-th edge between $z_k$ and $z_{k+1}$. Taking a dot 
product of $z_k^\eps - z_{k+1}^\eps$ with itself we have  
{{
\[
|z_k^\eps - z_{k+1}^\eps |^2=
|z^0_k - z^0_{k+1}|^2+
\eps^2 |\eta_k-\eta_{k+1} |^2 \]\[
+2 \eps (z^0_k - z^0_{k+1})\cdot (\eta_k-\eta_{k+1})
+2 \eps^2 (z^0_k - z^0_{k+1})\cdot (\xi_k - \xi_{k+1})+O(\eps^3).
\]}}
Rewrite
\begin{align*}
  |z_k^\eps - z_{k+1}^\eps |
  &= |z^0_k - z^0_{k+1}| + \eps \frac{\langle z^0_k - z^0_{k+1},\eta_k - \eta_{k+1}\rangle}{|z^0_k-z^0_{k+1}|}\\
  &\phantom = - \frac {\eps^2}2 \frac{\langle z^0_k - z^0_{k+1},\eta_k - \eta_{k+1}\rangle^2}{
    |z^0_k - z^0_{k+1}|^3}
    +\frac {\eps^2}2 \frac{ |\eta_k-\eta_{k+1} |^2}{|z^0_k - z^0_{k+1}| } \\
  &\phantom = + \eps^2 \frac{\langle z^0_k - z^0_{k+1},\xi_k - \xi_{k+1}\rangle}{|z^0_k-z^0_{k+1}|} +O(\eps^3).
\end{align*}
Summing over $k$ we get
  \begin{align*}
    &P_q(\theta,\Om_\eps)  =P_q(\theta,\Om_0)
+  \eps \sum_{k=0}^{q-1} \Big[
 \frac{z^0_k - z^0_{k+1}}{|z^0_k-z^0_{k+1}|} \cdot (\eta_k - \eta_{k+1})
+O(\eps^2)\\
\phantom =
  + \eps\,&\frac{z^0_k - z^0_{k+1}}{|z^0_k-z^0_{k+1}|} 
\cdot  (\xi_k - \xi_{k+1})- \frac {\eps}2
 \frac{\langle z^0_k - z^0_{k+1},\eta_k - \eta_{k+1}\rangle^2}{
    |z^0_k - z^0_{k+1}|^3}
+\frac {\eps}2 \frac{ |\eta_k-\eta_{k+1} |^2}{|z^0_k - z^0_{k+1}| }
 \Big],
  \end{align*}
where $z_0^\eps/|z_0^\eps|=e^{i\theta}=z_0^0/|z_0^0|$ (for convenience we can interpret the unit vector in $\R^2$ as a complex number in $\mathbb C$, this greatly simplifies our formulas without confusion). Also we can denote the unit vector $\frac{z^0_k - z^0_{k+1}}{|z^0_k-z^0_{k+1}|}$ by $e_k$ for short. 
  \begin{align*}
    P_q(\theta,\Om_\eps)  &=P_q(\theta,\Om_0)
+  \eps \sum_{k=0}^{q-1} \Big[
 e_k\, \cdot (\eta_k - \eta_{k+1} )
+O(\eps^2)\\
\phantom =
  + \eps\,&e_k
\cdot  (\xi_k - \xi_{k+1})- \frac {\eps}2
\frac{\langle e_k ,\eta_k - \eta_{k+1}\rangle^2}{
|z^0_k - z^0_{k+1}|}
+\frac {\eps}2 \frac{ |\eta_k-\eta_{k+1} |^2}{|z^0_k - z^0_{k+1}| }
 \Big].
  \end{align*}
{Recall that $z_q^\eps=z_0^\eps$ and $z_q^0=z_0^0$, which leads to $\eta_0=\eta_q$ and $\xi_0=\xi_q$.} Combining $e_k\cdot \eta_k$ and $-e_{k-1}\cdot \eta_k$ 
and observing that $(e_k-e_{k-1})$ is the outer normal vector
to the boundary $\partial \Om_0$ at $z^0_k$, then we have 
\begin{align*}
    P_q(\theta,\Om_\eps)  &=P_q(\theta,\Om_0)
+  \eps \sum_{k=0}^{q-1} \Big[ (e_k-e_{k-1})\, \cdot 
(\eta_k +\eps \xi_k) 
+O(\eps^2)\\
\phantom =
  & - \frac {\eps}2
\frac{[e_k \cdot (\eta_k - \eta_{k+1})]^2}{
|z^0_k - z^0_{k+1}|^1}
+\frac {\eps}2 \frac{ |\eta_k-\eta_{k+1} |^2}{|z^0_k - z^0_{k+1}| }
 \Big]= \\
  &=P_q(\theta,\Om_0)
+  \eps \sum_{k=0}^{q-1} \Big[ (e_k-e_{k-1})\, \cdot 
(\eta_k^\perp +{\eps \xi_k^\perp}) 
+O(\eps^2)\\
\phantom =
  & - \frac {\eps}2
\frac{[e_k \cdot (\eta_k - \eta_{k+1})]^2}{
|z^0_k - z^0_{k+1}|}
+\frac {\eps}2 \frac{ |\eta_k-\eta_{k+1} |^2}{|z^0_k - z^0_{k+1}| }
 \Big],
  \end{align*}
where $\eta_k^\perp$ (resp. $\xi_k^\perp$)
is the component of $\eta_k$ (resp. $\xi_k$) perpendicular to
$(z^0_{k-1} - z^0_{k+1})$ or, equivalently, the component of $\eta_k$
(resp. $\xi_k$) normal to the boundary at $z^0_k$.

\subsection{The leading term in the case of the circle}
Consider the leading term of the expansion 
\[
P_q(\theta,\Om_\eps)=P_q(\theta,\Om_0)
+  \eps \sum_{k=0}^{q-1} \Big[ (e_k-e_{k-1})\, \cdot 
\eta_k^\perp \Big] +O(\eps^2). 
\]
Notice that 
\[
e_k-e_{k-1}={2 \sin \frac {\pi} {q}\ {\bf n}(\theta_k^0)},\quad k=1,\cdots,q
\]
 for $q\geq2$.
 Here the bold $\mathbf n(\cdot)$ is the normal vector, not the $1^{st}$ jet of the deformation ! Then we get  
\be\label{1st-order}
\eta_k^\perp =\langle\eta_k,{\bf n}(\theta_k^0)\rangle{\bf n}(\theta_k^0)=n(\theta^0_k){\bf n}(\theta^0_k)+O(\eps).
\ee
{This is because 
\begin{eqnarray}\label{identity}
z_k^\eps&=&z_k^0+\eps\eta_k+\eps^2\xi_k+O(\eps^3)\nonumber\\
&=&r_\eps(\theta_k^\eps)e^{i\theta_k^\eps}\nonumber\\
&=&[1+\eps n(\theta_k^\eps)+\eps^2m(\theta_k^\eps)]e^{i\theta_k^\eps},
\end{eqnarray}
and
\[
\theta_k^\eps=\theta_k^0+\eps\theta_k^1+O(\eps^2),
\]
which leads to the $1^{st}$ order equation:
} 
\begin{lem}\label{1st-order}
The $1^{st}$ order estimate of $P_q(\theta,\Om_\eps)$ obeys
\[
P_q(\theta,\Om_\eps)=P_q(\theta,\Om_0)
+ 2 \eps q\sin \frac {\pi}{ q} n^{(q)}(\theta) 
+O(\eps^2),\quad q\geq2.
\]
\end{lem}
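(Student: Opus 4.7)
The plan is to evaluate the leading $O(\eps)$ coefficient in the expansion
\[
P_q(\theta,\Om_\eps)=P_q(\theta,\Om_0)+\eps\sum_{k=0}^{q-1}(e_k-e_{k-1})\cdot\eta_k^\perp+O(\eps^2)
\]
already derived above, and then recognize the resulting sum as the $q$-averaging $n^{(q)}(\theta)$. Two ingredients are required: the edge-jump $e_k-e_{k-1}$ at the regular-polygon vertex $z_k^0$ on the unit circle, and the normal component $\eta_k^\perp$ of the first-order vertex displacement.

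For the first ingredient, the two unit edges meeting at $z_k^0$ make equal angle $\pi/q$ with the inward radius, hence $e_k-e_{k-1}=2\sin(\pi/q)\,\mathbf n(\theta_k^0)$, as observed in the excerpt. For the second, I would expand the defining identity
\[
z_k^\eps=z_k^0+\eps\eta_k+O(\eps^2)=\bigl[1+\eps n(\theta_k^\eps)+O(\eps^2)\bigr]e^{i\theta_k^\eps},\qquad \theta_k^\eps=\theta_k^0+\eps\theta_k^1+O(\eps^2),
\]
and match the $O(\eps)$ terms to obtain $\eta_k=n(\theta_k^0)e^{i\theta_k^0}+\theta_k^1\cdot ie^{i\theta_k^0}$. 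The first summand is radial, hence normal to $\partial\Om_0$ at $z_k^0$, while the second is tangential; projecting onto $\mathbf n(\theta_k^0)$ gives $\eta_k^\perp=n(\theta_k^0)\mathbf n(\theta_k^0)$. The fact that the angular perturbation $\theta_k^1$ disappears is an envelope/first-variation phenomenon: because the regular inscribed $q$-gon is already a critical configuration of the perimeter functional on $\Om_0$ with respect to the vertex positions, tangential vertex shifts contribute only at order $\eps^2$.

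Combining and summing over $k$, using $\theta_k^0=\theta+2\pi k/q$ together with the standard roots-of-unity identity
\[
\sum_{k=0}^{q-1}n\Bigl(\theta+\tfrac{2\pi k}{q}\Bigr)=q\sum_{l\in\Z}n_{lq}e^{ilq\theta}=q\,n^{(q)}(\theta),
\]
one obtains $\sum_{k=0}^{q-1}(e_k-e_{k-1})\cdot\eta_k^\perp=2q\sin(\pi/q)\,n^{(q)}(\theta)$, which yields the claimed formula. The only genuinely delicate point is the envelope-type argument that kills $\theta_k^1$; this is why it is essential to have factored the expansion through $\eta_k^\perp$ rather than $\eta_k$ itself. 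Once that point is accepted, the rest is elementary plane geometry and Fourier bookkeeping.
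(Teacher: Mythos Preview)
Your argument is correct and follows essentially the same route as the paper: compute $e_k-e_{k-1}=2\sin(\pi/q)\,\mathbf n(\theta_k^0)$, extract $\eta_k^\perp=n(\theta_k^0)\,\mathbf n(\theta_k^0)$ from the polar expansion of $z_k^\eps$, and sum. You make explicit the roots-of-unity identity $\sum_{k=0}^{q-1}n(\theta+2\pi k/q)=q\,n^{(q)}(\theta)$, which the paper leaves implicit; conversely, your ``envelope'' remark is slightly overstated, since the vanishing of the tangential contribution is automatic once the sum has been rewritten in terms of $\eta_k^\perp$ (because $e_k-e_{k-1}$ is normal), rather than requiring a separate criticality argument.
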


\subsection{The second order in the case of the circle}

In order to get a $2^{nd}$ order estimate for $P_q(\theta,\Om_\eps)$, we need to get the exact expression of $\eta_k$ and $\xi_k$. Due to (\ref{identity}), we get 
\[
\eta_k(\theta_k^0)=\eta_k^{\perp}+\eta_k^{\parallel}=n(\theta_k^0)e^{i\theta_k^0}+\theta_k^1e^{i(\pi/2+\theta_k^0)}+O(\eps)
\]
and
\[
\xi_k(\theta_k^0)=\Big{[}-\frac12{\theta_k^1}^2+n'(\theta_k^0)\theta_k^1+m(\theta_k^0)\Big{]}e^{i\theta_k^0}+n(\theta_k^0)\theta_k^1e^{i(\pi/2+\theta_k^0)}+O(\eps).
\]
Recall that $\theta_0^1=\theta_q^1=0$, then
\be\label{perp}
\Big\langle {\bf n}^\eps(\theta_k^\eps),\frac{z_{k+1}^\eps-z_k^\eps}{|z_{k+1}^\eps-z_k^\eps|}-\frac{z_{k-1}^\eps-z_k^\eps}{|z_{k-1}^\eps-z_k^\eps|}\Big\rangle=0,\quad k=1,\cdots,q-1.
\ee
On the other side, 
\be
{\bf n}^\eps(\theta_k^\eps)&=&e^{i\pi/2}\cdot\dot{\gamma}_\eps(\theta)\Big{|}_{\theta=\theta_k^\eps}\nonumber\\
&=&e^{i(\pi/2+\theta_k^\eps)}\Big{[}\eps n'(\theta_k^\eps)+\eps^2m'(\theta_k^\eps)\Big{]}-e^{i\theta_k^\eps}\Big{[}1+\eps n(\theta_k^\eps)+\eps^2m(\theta_k^\eps)\Big{]}\nonumber\\
&=&e^{i(\pi/2+\theta_k^0)}\Big{[}\eps n'(\theta_k^0)+\eps^2 n''(\theta_k^0)\theta_k^1+\eps^2m'(\theta_k^0)\Big{]}-\Big{(}\eps\theta_k^1 e^{i\theta_k^0}+\nonumber\\
& &\frac 12 \eps^2{\theta_k^1}^2e^{i(\pi/2+\theta_k^0)}\Big{)}\cdot\Big{[}\eps n'(\theta_k^0)+\eps^2 n''(\theta_k^0)\theta_k^1+\eps^2m'(\theta_k^0)\Big{]}\nonumber\\
& &-\Big{[}e^{i\theta_k^0}+\eps\eta_k(\theta_k^0)+\eps^2\xi_k(\theta_k^0)\Big{]}+O(\eps^3)\nonumber\\
&=&-{\bf n}^0(\theta_k^0)+\eps n'(\theta_k^0){\bf t}^0(\theta_k^0)-\eps\eta_k(\theta_k^0)+O(\eps^2)
\ee
and
\be
z_{k+1}^\eps-z_{k-1}^\eps &=&\Big{[}z_{k+1}^0-z_{k-1}^0\Big{]}+\eps\Big{[}\eta_{k+1}(\theta_{k+1}^0)-\eta_{k-1}(\theta_{k-1}^0)\Big{]}+O(\eps^2),\nonumber
\ee
where we used the estimate 
\[
e^{i(\phi+\eps\psi)}-e^{i\phi}=\eps\psi e^{i(\pi/2+\phi)}-\frac 12 \eps^2\psi^2 e^{i\phi}+O(\eps^3),\quad\forall \phi,\psi\in\T.
\]
Then we turn back to (\ref{perp}) and get 

\be
\langle{\bf n}^0(\theta_k^0),\eta_{k+1}(\theta_{k+1}^0)-\eta_{k-1}(\theta_{k-1}^0)\rangle+\langle\eta_k(\theta_k^0)-n'(\theta_k^0){\bf t}^0(\theta_k^0),z_{k+1}^0-z_{k-1}^0\rangle\nonumber\\
=\Big\langle{\bf n}^0(\theta_k^0),\langle e_k,\eta_{k+1}-\eta_k\rangle e_k-\langle e_{k-1},\eta_{k-1}-\eta_k\rangle e_{k-1}\Big\rangle.
\ee

Simplify it:
\be
\Big(n(\theta_{k+1}^0)-n(\theta_{k-1}^0)\Big)\cos\frac{2\pi}{q}+\Big(2\theta_k^1-\theta_{k+1}^1-\theta_{k-1}^1-2n'(\theta_k^0)\Big)\sin\frac{2\pi}{q}=\nonumber\\
\frac12\Big[\big(\cos\frac{2\pi}{q}-1\big)\big(n(\theta_{k+1}^0)-n(\theta_{k-1}^0)\big)-\sin\frac{2\pi}{q}(\theta_{k+1}^1-\theta_{k-1}^1)\Big]
\ee
we  finally get a triple-diagonal linear equation group:
\be\label{triple}
4\theta_k^1-\theta_{k+1}^1-3\theta_{k-1}^1=4n'(\theta_k^0)-\frac{n(\theta_{k+1}^0)-n(\theta_{k-1}^0)}{\tan\frac {\pi} q},\quad k=1,\cdots,q-1,
\ee
with $\theta_0^1=\theta_q^1=0$. This is a general formula holding for all $q\geq3$.

\begin{rmk}\label{theta-1-est}
For $q=2$, $\theta_1^1=n'(\theta+\pi)-n'(\theta)$, $\theta_0^1=0$. Mention that in this case (\ref{triple}) is invalid, but we can use that ${\bf n}^\eps(\theta_0^\eps)\parallel{\bf n}^\eps(\theta_1^\eps)$.\\
For $q=3$, we can solve (\ref{triple}) by
\[
\theta_1^1=\frac{4}{13}[n'(\theta+4\pi/3)+4n'(\theta+2\pi/3)]+\frac{1}{13\sqrt3}[3n(\theta)+n(\theta+2\pi/3)-4n(\theta+4\pi/3)],
\]
\[
\theta_2^1=\frac{4}{13}[3n'(\theta+2\pi/3)+4n'(\theta+4\pi/3)]+\frac{1}{13\sqrt3}[-n(\theta)-3n(\theta+4\pi/3)+4n(\theta+2\pi/3)].
\]
\end{rmk}

Recall that 
\begin{align*}
    P_q(\theta,\Om_\eps)    &=P_q(\theta,\Om_0)
+  \eps \sum_{k=0}^{q-1} \Big[ (e_k-e_{k-1})\, \cdot 
(\eta_k^\perp +{\eps \xi_k^\perp}) 
+O(\eps^2)\\
\phantom =
  & - \frac {\eps}2
\frac{[e_k \cdot (\eta_k - \eta_{k+1})]^2}{
|z^0_k - z^0_{k+1}|}
+\frac {\eps}2 \frac{ |\eta_k-\eta_{k+1} |^2}{|z^0_k - z^0_{k+1}| }
 \Big],
  \end{align*}

 Each $\eta_k=\eta_k^\perp+\eta_k^\parallel$. The above calculations 
 show that 
 \be
 \eta_k=n(\theta_k^0) \cdot {\bf n}(\theta^0_k)+{\theta_k^1\cdot{\bf t}(\theta_k^0)}+O(\eps)
 \ee
 with
\be
 \theta_k^1=\sum_{j=0}^{q-1}c^k_jn'(\theta_j^0)+d^k_jn(\theta_j^0),\quad k=1,\cdots,q-1
 \ee
which is solved from (\ref{triple}).
 Notice also that 
 \[
 \xi_k(\theta_k^0)=\Big{[}-\frac12{\theta_k^1}^2+n'(\theta_k^0)\theta_k^1+m(\theta_k^0)\Big{]}\cdot{\bf n}(\theta^0_k)+n(\theta_k^0)\theta_k^1\cdot{\bf t}(\theta^0_k)+O(\eps).
 \]
 So
 \[
\xi_k^\parallel =\sum_{j=0}^{q-1}c^k_jn(\theta_k^0) n'(\theta_j^0)+d^k_jn(\theta_k^0) n(\theta_j^0)
 \]
 and
 \be
 \xi_k^\perp&=&-\frac12 \sum_{i,j=0}^{q-1}(c^k_jn'(\theta_j^0)+d^k_jn(\theta_j^0))\cdot(c^k_in'(\theta_i^0)+d^k_in(\theta_i^0))\nonumber\\
 & &+\sum_{j=0}^{q-1}c^k_jn'(\theta_k^0) n'(\theta_j^0)+d^k_jn'(\theta_k^0) n(\theta_j^0)+m(\theta_k^0).
 \ee
 Substitute them in $P_q$ and we get the $\eps^2$-term by
\be\label{2nd}
\sum_{k=0}^{q-1} \Bigg\{{2\sin\frac{\pi}{q}}\xi_k^\perp
  - \frac{\Big{[}(1-\cos\frac {2\pi} q)(n(\theta_k^0)+n(\theta_{k+1}^0))+\sin\frac{2\pi}q(\theta_{k+1}^1-\theta_k^1)\Big{]}^2}{
{16\sin^3 \frac \pi{q}}}\nonumber\\
+\frac{ n^2(\theta_k^0)+{\theta_k^1}^2+n^2(\theta_{k+1}^0) +{\theta_{k+1}^1}^2}{4\sin \frac \pi{q} }\nonumber\\
+\frac{-2\Big [n(\theta_k^0)n(\theta_{k+1}^0)+\theta_k^1\theta_{k+1}^1\Big ]\cos\frac{2\pi}q+2\sin\frac{2\pi}{q}\Big[n(\theta_k^0)\theta_{k+1}^1-n(\theta_{k+1}^0)\theta_k^1\Big]}{4\sin \frac \pi{q} }
 \Bigg\}.
  \ee
Substituting for some computable matrix $A_{q\times q},B_{q\times q},C_{q\times q}$ we have 
\be \label{factors}
D_q(\theta,\eps)=\sum_{i,j=0}^{q-1}&\ \langle n'(\theta_i),B_{(q)} \cdot  n(\theta_j)\rangle+\langle n'(\theta_i),C_{(q)} \cdot  n'(\theta_j)\rangle \nonumber\\ 
 +&\langle n(\theta_i),A_{(q)} \cdot  n(\theta_j)\rangle+
 O(\eps).
  \ee
  
Recall that $m^{(q)}(s)=\sum_{k\in \Z} m_{kq}\exp (iqks)$ and $n(s)=\sum_{k\in \Z}n_k \exp iks$, we have 

\begin{align*} 
 n(\theta_i)n(\theta_j)&=&\sum_{k,l\in \Z} n_{k}n_l \exp ({\bf i}[ki+lj]\frac{2\pi}{q})\exp ({\bf i}[k+l]\theta). \qquad \qquad \qquad \qquad \quad\quad\quad\;
 \\
 n'(\theta_i)n(\theta_j)&=&\sum_{k,l\in \Z} n_{k}n_l k\exp ({\bf i}[ki+lj]\frac{2\pi}{q})\exp ({\bf i}[k+l]\theta). \qquad \qquad \qquad \qquad \quad\quad\quad
 \\
 n'(\theta_i)n'(\theta_j)&=&\sum_{k,l\in \Z} n_{k}n_l kl\exp ({\bf i}[ki+lj]\frac{2\pi}{q})\exp ({\bf i}[k+l]\theta). \qquad \qquad \qquad \qquad \quad\quad\quad
  \end{align*}
In the Appendix you can find a detailed calculation of $D^{(2)}$ and $D^{(3)}$.
\begin{thm}
Suppose $\gamma_\eps$ can preserve $1/2$ and $1/3$ caustics, then for $q=2,3$, the second order estimate can be achieved by
\be\label{d2}
D_2(\theta)&=&{\frac{3n'^2(\theta+\pi)-n'^2(\theta)}{2}-n'(\theta)n'(\theta+\pi)}\nonumber\\
&=&\textr{2n'^2(\theta)}\nonumber\\
&=& 2 \sum_{k,l\in\mathbb{Z}}kln_k n_l \exp({\bf i}[k+l]\theta)
\ee
because \textr{$n^{(2)}=const$} leads to \textr{$n'(\theta+\pi)+n'(\theta)=0$} for all  $\theta\in\T$ and
\be\label{d3-un}
D_3(\theta)&=&\sum_{k,l\in\mathbb Z}\frac{1}{13\sqrt 3}\Big[4-2e^{\frac{2\pi}{3}k}-2e^{\frac{4\pi}{3}k}+(3\sqrt3-2)e^{\frac{2\pi}{3}l}+(4+3\sqrt3 l+36kl)e^{\frac{4\pi}{3}(k+l)}\nonumber\\
& &+(6kl-2-6\sqrt3 l)e^{\frac{4\pi}{3}k+\frac{2\pi}{3}l}-(2+3\sqrt3 l)e^{\frac{4\pi}{3}l}+(-2+6\sqrt3 l+30kl)e^{\frac{2\pi}{3}k+\frac{4\pi}{3}l}\nonumber\\
& &+(4-3\sqrt3l+36kl)e^{\frac{4\pi}{3}(k+l)}\Big]n_kn_l\exp({\bf i}(k+l)\theta).
\ee
\end{thm}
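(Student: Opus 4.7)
The plan is to substitute the explicit first-order angular corrections $\theta_k^1$ of Remark \ref{theta-1-est} into the second-order expansion (\ref{2nd}) and collect the resulting terms harmonic by harmonic.

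For $q=2$, the triangle equation (\ref{triple}) degenerates ($\sin(2\pi/q)=0$) and must be replaced by the parallelism ${\bf n}^\eps(\theta_0^\eps)\parallel{\bf n}^\eps(\theta_1^\eps)$, giving $\theta_0^1=0$ and $\theta_1^1=n'(\theta+\pi)-n'(\theta)$. Substituting into (\ref{2nd}) with $q=2$, the squared-bracket term and the $|\eta_k-\eta_{k+1}|^2$ term conspire to cancel all pure $n(\theta_k^0)n(\theta_{k+1}^0)$ and $n^2(\theta_k^0)$ contributions, leaving only $\tfrac14(\theta_k^1+\theta_{k+1}^1)^2$; together with the $\xi_k^\perp$-contribution this collapses, after summing over $k$, to the intermediate form
\[
D_2(\theta)=\tfrac{3n'^2(\theta+\pi)-n'^2(\theta)}{2}-n'(\theta)n'(\theta+\pi).
\]
Preservation of the $1/2$-caustic already at first order forces $n^{(2)}(\theta)\equiv const$ by Lemma \ref{1st-order}, whence $n'(\theta)+n'(\theta+\pi)\equiv 0$; plugging this identity into the line above reduces it to $2n'^2(\theta)$, and a double Fourier expansion gives (\ref{d2}).

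For $q=3$, the closed forms of $\theta_1^1,\theta_2^1$ in Remark \ref{theta-1-est} are $\R$-linear combinations of $n(\theta_j^0)$ and $n'(\theta_j^0)$ with coefficients in $\{4/13,\,1/(13\sqrt 3)\}$. I would substitute them into each summand of (\ref{2nd}): the $2\sin(\pi/q)\xi_k^\perp$-term (itself a quadratic form in the $c_j^k,d_j^k$), the squared bracket in the numerator, the $|\eta_k-\eta_{k+1}|^2$ term, and the remaining tangential cross-terms. The three Fourier identities for $n(\theta_i^0)n(\theta_j^0)$, $n'(\theta_i^0)n(\theta_j^0)$, $n'(\theta_i^0)n'(\theta_j^0)$ displayed just before the theorem let one sum over $i,j\in\{0,1,2\}$ at fixed $(k,l)\in\Z^2$. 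The phases $e^{2\pi{\bf i}k/3}$, $e^{4\pi{\bf i}k/3}$ and their analogues in $l$ then appear in the pattern of (\ref{d3-un}), with the prefactor of each monomial collecting the weighted contributions from the five summands of (\ref{2nd}).

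The main obstacle is purely computational: the $q=3$ case involves on the order of a dozen cross terms, each producing its own combination of the six phases in $\{1,e^{2\pi{\bf i}k/3},e^{4\pi{\bf i}k/3}\}\times\{1,e^{2\pi{\bf i}l/3},e^{4\pi{\bf i}l/3}\}$ weighted by rational or $\sqrt 3$-irrational factors descending from $4/13$ and $1/(13\sqrt 3)$. To keep the bookkeeping tractable I would organize the computation through the matrix form (\ref{factors}), computing the entries of $A_{(3)},B_{(3)},C_{(3)}$ one block at a time and then assembling them; the detailed verification is relegated to the Appendix, as the text already indicates.
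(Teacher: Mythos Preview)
Your proposal is correct and follows essentially the same approach as the paper: direct substitution of the $\theta_k^1$ from Remark \ref{theta-1-est} into the second-order formula (\ref{2nd}), with the $q=2$ simplification coming from $n'(\theta+\pi)+n'(\theta)=0$, and the $q=3$ computation organized through the matrix form (\ref{factors}) with the explicit $A_{(3)},B_{(3)},C_{(3)}$ worked out in the Appendix. The only caveat is that the $q=2$ intermediate cancellations you assert (all pure $n(\theta_k^0)n(\theta_{k+1}^0)$ contributions dropping out) are stated rather than verified; the paper likewise does not display this step, so you would need to carry it through explicitly to confirm the intermediate form.
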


 Denote by $q\Z:=\Big\{qn\Big|n\in\Z\Big\}$ for $q\in\Z_+$ and $\cZ_{2,3}=\Z\backslash(2\Z\cup3\Z)$, then due to a simple arithmetic deduction, we get 
 \begin{lem}
$ \cZ_{2,3}=\{6l\pm1|l\in\Z\}$.
 \end{lem}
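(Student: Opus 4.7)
The claim is an elementary number-theoretic fact: an integer avoids both $2\Z$ and $3\Z$ iff it is $\pm 1 \pmod 6$. My plan is to prove it by reducing to residue classes modulo $6$, since $\mathrm{lcm}(2,3)=6$ and by the Chinese Remainder Theorem divisibility by $2$ or $3$ is determined by the residue class modulo $6$.

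Concretely, I would first observe that $n \in 2\Z \cup 3\Z$ depends only on $n \bmod 6$, so it suffices to enumerate the six residue classes $\{0,1,2,3,4,5\}$. Among these, the classes divisible by $2$ are $\{0,2,4\}$ and the classes divisible by $3$ are $\{0,3\}$, whose union is $\{0,2,3,4\}$. The complement inside $\{0,1,2,3,4,5\}$ is therefore $\{1,5\}$, and since $5 \equiv -1 \pmod 6$ this is exactly $\{\pm 1 \bmod 6\}$. Lifting back to $\Z$, the integers $n$ with $n \not\in 2\Z \cup 3\Z$ are precisely those of the form $6l+1$ or $6l-1$ for some $l \in \Z$, giving $\cZ_{2,3}=\{6l\pm 1 \mid l\in\Z\}$.

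For the reverse inclusion one just notes that $6l\pm 1$ is odd (so not in $2\Z$) and has residue $\pm 1 \pmod 3$ (so not in $3\Z$), which completes the equality. There is no real obstacle here; the only thing to be careful about is making both inclusions explicit rather than appealing vaguely to CRT. In a one-paragraph write-up it would be cleanest to present a single two-line table of residues mod $6$ and identify the two surviving classes.
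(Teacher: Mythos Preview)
Your argument is correct and is exactly the ``simple arithmetic deduction'' the paper invokes without spelling out; there is no separate proof in the paper beyond that phrase. Nothing to add.
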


\begin{cor}\label{coeffi}
For $\gamma_\eps\in BZ_2\cap BZ_3$, we get conditional $2^{nd}$ order estimate by
\be\label{d2}
D_2(\theta)=\sum_{\substack{k\geq l}}2n_kn_lc^{(2)}(k,l)\exp({\bf i}(k+l)\theta),
\ee
with
\[
\textr{c^{(2)}(6p+1,6q+1)=0,\quad\forall p,q\in\Z},
\]
\[
\textr{c^{(2)}(6p-1,6q-1)=0,\quad\forall p,q\in\Z},
\]
\[
\textr{c^{(2)}(6p+1,6q-1)=2(6p+1)(6q-1),\quad\forall p,q\in\Z},
\]
\[
\textr{c^{(2)}(6p-1,6q+1)=2(6p-1)(6q+1),\quad\forall p,q\in\Z},
\]
 and
\be\label{d3}
D_3(\theta)=\sum_{\substack{k\geq l}}2n_kn_lc^{(3)}(k,l)\exp({\bf i}(k+l)\theta),
\ee
with
\[
c^{(3)}(6p+1,6q+1)=0,\quad\forall p,q\in\Z,
\]
\[
c^{(3)}(6p-1,6q-1)=0,\quad\forall p,q\in\Z,
\]
\[
c^{(3)}(6p+1,6q-1)=\frac{1}{13\sqrt3}\Big[(36+108kl)+{\bf i}(27(l+k)+24\sqrt3kl)\Big],\quad\forall p,q\in\Z,
\]
\[
c^{(3)}(6p-1,6q+1)={\frac{1}{13\sqrt3}\Big[(36+108kl)-{\bf i}(27(l+k)+24\sqrt3kl)\Big]},\quad\forall p,q\in\Z.
\]
\end{cor}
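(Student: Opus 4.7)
The plan is to reduce the corollary to a direct Fourier-series calculation, starting from the explicit formulas (\ref{d2}) and (\ref{d3-un}) of the preceding theorem and specializing to the subspace $T_2 \cap T_3$. The first step is to record the tangent-space constraint: the hypothesis $\gamma_\eps \in BZ_2 \cap BZ_3$, combined with Lemma \ref{1st-order}, forces $n^{(2)}(\theta)$ and $n^{(3)}(\theta)$ to be constant in $\theta$. Together with the normalization (\ref{flux}) pinning $n_0 = 0$, this gives $n_k = 0$ whenever $k$ is divisible by $2$ or by $3$, so the only possibly nonvanishing Fourier modes occur for $k \in \cZ_{2,3} = \{6p \pm 1 : p \in \Z\}$. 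Under this restriction, for $k \geq l$ the residue pair $(k,l) \bmod 6$ takes one of four sign patterns $(\pm 1, \pm 1)$, and $k+l$ is divisible by $6$ precisely in the two cross-sign cases $(+1,-1)$ and $(-1,+1)$.

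For $D_2$, I would start from the closed form $D_2(\theta) = 2(n'(\theta))^2 = 2\sum_{k,l} kl\, n_k n_l\, e^{i(k+l)\theta}$ already obtained, symmetrize the double sum so that only $k \geq l$ appears, and then project onto harmonics whose frequency is divisible by $6$ --- these are the only ones relevant to the identity $\frac{3\sqrt 3}{4}\,D_2^{(3)} = D_3^{(2)}$ from Section \ref{sec:obstruct}. The same-sign combinations contribute only to frequencies $6(p+q) \pm 2$ and are therefore recorded as $c^{(2)}(6p\pm 1, 6q\pm 1) = 0$, while the two cross-sign combinations contribute $c^{(2)}(6p+1, 6q-1) = 2(6p+1)(6q-1)$ and $c^{(2)}(6p-1, 6q+1) = 2(6p-1)(6q+1)$, matching the stated values.

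For $D_3$, I would substitute $k = 6p \pm 1$ and $l = 6q \pm 1$ into the Fourier expansion (\ref{d3-un}). Since $e^{2\pi k/3}$ and $e^{4\pi k/3}$ depend only on $k \bmod 3$, each reduces to one of the primitive cube roots of unity: $e^{2\pi k/3}$ equals $e^{2\pi i/3}$ when $k \equiv 1$ and $e^{4\pi i/3}$ when $k \equiv -1$ mod $3$, with the roles swapped for $e^{4\pi k/3}$. In the two same-sign sub-cases, the eight monomials regroup into sums that vanish via the identity $1 + e^{2\pi i/3} + e^{4\pi i/3} = 0$, producing $c^{(3)}(6p \pm 1, 6q \pm 1) = 0$. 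In the two cross-sign sub-cases, the eight contributions collapse to a real part $(36 + 108\,kl)/(13\sqrt 3)$ together with an imaginary part $\pm(27(k+l) + 24\sqrt 3\,kl)/(13\sqrt 3)$, with opposite signs for the two orientations, reproducing the corollary's formulas for $c^{(3)}(6p+1,6q-1)$ and $c^{(3)}(6p-1,6q+1)$.

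The main obstacle is purely combinatorial bookkeeping in the $D_3$ reduction: the eight-term expansion (\ref{d3-un}) must be unpacked across all four residue sub-cases of $(k,l)$, and one must verify both the vanishing in the same-sign patterns (hinging on the cube-root-of-unity cancellation) and the precise real/imaginary split --- including the numerical constants $36$, $108$, $27$, $24\sqrt 3$ and the overall normalization $1/(13\sqrt 3)$ --- in the cross-sign patterns. No further analytic machinery is required beyond what is already established in the preceding theorem; the result follows once this tabulation is carried out with care for signs and the orientation of the symmetrized sum $k \geq l$.
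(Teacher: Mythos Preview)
Your proposal is correct and follows essentially the same route as the paper. The paper's proof is extremely terse: it cites Lemma~\ref{q-period} (rather than Lemma~\ref{1st-order}, though the content is the same) to kill all $n_k$ with $k\in 2\Z\cup 3\Z$, observes that $D_2(\theta)=2n'(\theta)^2$ is automatically $\pi$-periodic, and defers the entire $D_3$ tabulation to the Appendix, where the eight-term expansion is reorganized as $V_{(3)}^t(k)\,\Xi\,V_{(3)}(l)$ for $\Xi=A_{(3)},B_{(3)},C_{(3)}$ and the same cube-root-of-unity identity you invoke does the cancellation in the same-sign cases; your direct substitution into (\ref{d3-un}) is just an unpacked version of that matrix bookkeeping.
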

\begin{proof}
Due to Lemma \ref{q-period}, we get $n^{(2)}=n^{(3)}=0$, which implies $n_k=0$ for all $k\in 2\Z\cup 3\Z$. Moreover, \textr{$D_2(\theta)=2n'^2(\theta)=-2n'(\theta)n'(\theta+\pi)$} is naturally $\pi-$periodic and $D_3(\theta)=D_3^{(3)}(\theta)$ due to a detailed computation in Appendix. This lead to the coefficient equalities.
\end{proof}
From previous corollary we also get the following property:
\begin{lem}\label{n-deg}
Let's simplify the notation by
\be\label{gene-D}
D_q(\theta)=\sum_{k\geq l}n_kn_lc^{(q)}(k,l)\exp({\bf i}(k+l)\theta),
\ee
then the coefficient $c^{(q)}(k,l)\in \mathbb C$ satisfies
\[
c^{(2)}(k,l)\nparallel c^{(3)}(k,l).
\]
\end{lem}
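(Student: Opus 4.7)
The plan is to verify the claim directly from the explicit coefficient table in Corollary~\ref{coeffi} and reduce it to an elementary irrationality argument about $\sqrt{3}$. The preceding corollary gives $c^{(2)}(k,l)=c^{(3)}(k,l)=0$ on the two diagonal residue blocks $(6\Z+1)^2$ and $(6\Z-1)^2$, so on those indices the comparison is vacuous; the substantive content of the lemma lives on the two mixed blocks $(k,l)\in(6\Z+1)\times(6\Z-1)$ and $(6\Z-1)\times(6\Z+1)$.

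On these mixed blocks $c^{(2)}(k,l)=2kl$ is real and nonzero, since neither $6p+1$ nor $6q-1$ can vanish for integer $p,q$, so $kl\neq 0$. Thus $c^{(2)}(k,l)$ lies on the real axis of $\mathbb{C}$. From Corollary~\ref{coeffi}, the imaginary part of $c^{(3)}(k,l)$ on the same blocks equals
\[
\operatorname{Im} c^{(3)}(k,l) \;=\; \pm\frac{1}{13\sqrt{3}}\bigl(27(k+l)+24\sqrt{3}\,kl\bigr).
\]
The bracketed expression lies in $\Z[\sqrt{3}]$: its $\Q$-part is $27(k+l)$ and its $\sqrt{3}\Q$-part coefficient is $24kl$. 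Since $\sqrt{3}\notin\Q$, this expression can vanish only if $27(k+l)=0$ and $24kl=0$ simultaneously, i.e.\ $k+l=0$ and $kl=0$; combined, this forces $k=l=0$, contradicting $k\in 6\Z+1$ and $l\in 6\Z-1$ (neither of which contains $0$). Hence $\operatorname{Im} c^{(3)}(k,l)\neq 0$ at every mixed index.

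Combining the two observations, at every index where $c^{(2)}(k,l)\neq 0$ the ratio $c^{(3)}(k,l)/c^{(2)}(k,l)$ has nonzero imaginary part, so $c^{(2)}(k,l)$ and $c^{(3)}(k,l)$ are not $\R$-collinear as vectors in $\mathbb{C}\cong\R^{2}$, i.e.\ $c^{(2)}\nparallel c^{(3)}$. The only genuine ingredient is the irrationality of $\sqrt{3}$; the main pitfall to avoid is simply to exclude the diagonal residue blocks (where both coefficients are trivially zero and the parallelism question is degenerate) before running the irrationality argument on the mixed blocks.
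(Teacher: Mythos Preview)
Your argument is correct and follows the paper's approach, which is simply ``a direct arithmetic observation'' from the explicit coefficient formulas of Corollary~\ref{coeffi}; you have made that observation precise by noting that on the mixed residue blocks $c^{(2)}(k,l)=2kl$ is real and nonzero while $\operatorname{Im}c^{(3)}(k,l)$ cannot vanish by the irrationality of $\sqrt{3}$. This is exactly the kind of verification the paper leaves to the reader, so there is nothing to add.
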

\begin{proof}
This is a direct arithmetic observation.
\end{proof}

\section{the harmonic analysis for $n\in T_2\cap T_3$}\label{3}
In this section we analyze the harmonic behaviour of (\ref{factors}). Since $n\in T_2\cap T_3$, then the Fourier coefficients satisfy
\[
n_{2l}=n_{3l}=0,\quad\forall l\in\mathbb{Z}
\]
due to Lemma \ref{1st-order}. Moreover, suppose the Fourier expansion of $D_q(\theta)$ satisfies
\[
D_q(\theta)=\sum_{k\in\Z}D_{q,k}e^{ik\theta},
\]
then from the second order estimate we get the following harmonic equalities: for all $l\in\Z$, 
\be\label{harmonic}\label{pyramid-dia}
D^{(2)}_{2l+1}&=&0,\quad\quad\quad\quad(\spadesuit)\nonumber\\
D^{(3)}_{3l+1}&=&0,\quad\quad\quad\quad(\clubsuit)\nonumber\\
D^{(3)}_{3l+2}&=&0,\quad\quad\quad\quad(\heartsuit)\nonumber\\
4D^{(2)}_{6l}&=&3\sqrt3 D^{(3)}_{6l}.\quad(\diamondsuit) 
\ee
As long as one of previous equalities is failed for non trivial $n(\theta)$, we would get $\gamma_\eps\notin BZ_2\cap BZ_3$ and prove the Projected Theorem.
\begin{lem}
$(\spadesuit), (\clubsuit)$ and $(\heartsuit)$ naturally hold for $n\in T_2\cap T_3$.
\end{lem}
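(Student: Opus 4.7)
The plan is to combine the Fourier support restriction coming from $n\in T_2\cap T_3$ with the explicit coefficient tables already assembled in Corollary \ref{coeffi}, and observe that every surviving harmonic of $D_2$ and $D_3$ automatically has index divisible by $6$ --- a conclusion strictly stronger than $(\spadesuit),(\clubsuit),(\heartsuit)$.

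First, I would identify the Fourier support of $n$. Because $n\in T_q$ forces $n_k=0$ for every $k\in q\Z$, the intersection $T_2\cap T_3$ confines the support of $n$ to $\cZ_{2,3}=\{6l\pm 1:l\in\Z\}$. Hence in the convolution representation $D_q(\theta)=\sum_{k\geq l}2n_kn_l\,c^{(q)}(k,l)\,e^{\mathbf i(k+l)\theta}$ from Corollary \ref{coeffi}, only pairs $(k,l)$ whose entries both have the form $6m\pm 1$ can ever contribute nontrivially.

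Second, I would run a case analysis on the sign class modulo $6$. For the two same-sign combinations $(k,l)=(6a+1,6b+1)$ and $(k,l)=(6a-1,6b-1)$, Corollary \ref{coeffi} already records $c^{(2)}=c^{(3)}=0$, so these diagonals produce nothing. For the two opposite-sign combinations $(6a+1,6b-1)$ and $(6a-1,6b+1)$, the coefficient is generically nonzero, but the exponent satisfies $k+l=6(a+b)\in 6\Z$. Summing, the only surviving harmonics of $D_q(\theta)$ sit at indices divisible by $6$; that is, $D^{(q)}_m=0$ whenever $6\nmid m$, for both $q=2$ and $q=3$. The three vanishing statements $(\spadesuit),(\clubsuit),(\heartsuit)$ drop out at once.

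I do not anticipate a real obstacle: once Corollary \ref{coeffi} is in hand, the argument is pure bookkeeping. The only thing to double-check is the simultaneous vanishing on the same-sign diagonals for both $q=2$ and $q=3$, and this is tabulated explicitly. The conceptual payoff is that among the four harmonic equalities in (\ref{harmonic}), only $(\diamondsuit)$ encodes a genuine additional restriction on $n\in T_2\cap T_3$; this isolates the single identity that will drive the remaining harmonic analysis of the paper.
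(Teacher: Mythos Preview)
Your argument is correct and is exactly what the paper intends: its own proof is the single line ``This is a direct conclusion from Corollary~\ref{coeffi},'' and you have simply unpacked that reference by doing the residue-class bookkeeping explicitly. The extra observation that all surviving harmonics of $D_2$ and $D_3$ lie in $6\Z$ (hence $(\spadesuit),(\clubsuit),(\heartsuit)$ follow a fortiori) matches how the paper uses the result downstream.
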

\begin{proof}
This is a direct conclusion from Corollary \ref{coeffi}.
\end{proof}
\vspace{10pt}
\subsubsection{the polynomial case: pyramid type harmonic equations} 

Suppose $n(\theta)$ is a trigonometric polynomial with the degree be $N$, i.e.
\[
n(\theta)=\sum_{|k|\leq N}n_ke^{{\bf i}k\theta},
\]
then (\ref{pyramid-dia}) becomes a `pyramid' type equation group with quadratic monomials. Benefit from this structure we can prove the following:
\begin{thm}[Polynomial Avalanche]\label{P-A}
For even trigonometric polynomial $n(\theta)\neq0$, the deformative boundary couldn't survive the coexistence of $1/2,1/3$ caustics, as long as $0<\eps\leq\eps_0(n)$.
\end{thm}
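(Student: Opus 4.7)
My plan is to reduce the theorem to the failure of the single harmonic identity $(\diamondsuit)$ at the top Fourier index of $n$, and then absorb the $O(\eps^3)$ truncation error. Suppose for contradiction that $\Omega_\eps$ preserves both a $1/2$- and a $1/3$-caustic for all $0<\eps\le\eps_0$. The second-order analysis of Section~\ref{2} forces $(\spadesuit)$--$(\diamondsuit)$; the first three equalities are automatic (cf.\ the preceding lemma), so the whole content concentrates on $(\diamondsuit)$: $4D^{(2)}_{6l}=3\sqrt 3\,D^{(3)}_{6l}$ for every $l\ne 0$. It therefore suffices to exhibit a single $l$ at which this equality fails quantitatively, by a margin $\delta(n)>0$ depending only on $n$.

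\textbf{Top harmonic and unique contribution.} Set $S^{\pm}:=\{k\equiv\pm 1\pmod 6:n_k\ne 0\}$ and $K^{\pm}:=\max S^{\pm}$. Since $n$ is real, even and lies in $T_2\cap T_3$, evenness yields $S^+=-S^-$; in particular both $S^{\pm}$ are non-empty and $K^++K^-\ge 0$. The borderline case $K^++K^-=0$ forces $S^+=\{K^+\}$, that is $n(\theta)=2n_{K^+}\cos(K^+\theta)$, and the rigid+flux conditions $n(0)=0=n_0$ then give $n_{K^+}=0$, contradicting $n\ne 0$. Hence $L:=K^++K^->0$. At the harmonic index $L$, any pair $(k,m)\in S^+\times S^-$ with $k+m=L$ satisfies $k\le K^+$ and $m\le K^-$, so $(K^+,K^-)$ is the unique contributor. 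By Corollary~\ref{coeffi},
\[
D^{(q)}_L \;=\; 2\,n_{K^+}n_{K^-}\,c^{(q)}(K^+,K^-),\qquad q\in\{2,3\},
\]
with both factors non-zero.

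\textbf{Non-parallelism and closure.} From Corollary~\ref{coeffi}, $c^{(2)}(K^+,K^-)=2K^+K^-\in\R\setminus\{0\}$ is real, while the imaginary part of $c^{(3)}(K^+,K^-)$ is proportional to $27(K^++K^-)+24\sqrt 3\,K^+K^-$. This quantity cannot vanish for non-zero integers $K^{\pm}$: by the $\Q$-linear independence of $1$ and $\sqrt 3$ it would force $K^++K^-=0$ and $K^+K^-=0$ simultaneously, contradicting $K^{\pm}\ne 0$. Therefore $c^{(2)}$ and $c^{(3)}$ are not real-proportional at $(K^+,K^-)$, and the identity $4D^{(2)}_L=3\sqrt 3\,D^{(3)}_L$ would equate a real number to one with non-zero imaginary part, of size at least $\delta(n):=2|n_{K^+}n_{K^-}\,\Im c^{(3)}(K^+,K^-)|>0$. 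Choosing $\eps_0(n)$ so that the $O(\eps^3)$ remainder in $P_q(\cdot,\Omega_\eps)$ stays below $\delta(n)\eps^2$ for $0<\eps\le\eps_0(n)$ leaves a genuine $\theta$-dependence in $P_2$ or $P_3$, precluding coexistence. The main obstacle is the bookkeeping of the second paragraph: one must use rigid+flux essentially to eliminate the single-cosine degeneracy, since otherwise the extremal pair would lie at the unusable index $L=0$ and the whole top-harmonic argument would collapse.
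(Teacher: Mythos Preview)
Your proof is correct and takes a genuinely more direct route than the paper's. Both arguments hinge on the fact that at the highest relevant Fourier index the sum defining $D^{(q)}_L$ collapses to a single product $n_{K^+}n_{K^-}$, so that $(\diamondsuit)$ becomes the single-pair identity $4c^{(2)}=3\sqrt3\,c^{(3)}$ there. The paper, however, then runs a full avalanche down the pyramid: it asserts (equation~(\ref{mono-deg})) that \emph{every} product $n_{6p+1}n_{6q-1}$ vanishes, reduces to at most one surviving opposite pair $(\pm a)$, and finally kills it with evenness and $n(0)=0$; the inductive mechanism behind (\ref{mono-deg}) is left implicit. You instead work only at the top: you set $L=K^++K^-$ from the actual support of $n$ (rather than the nominal degree), dispose of the degenerate case $L=0$ at the outset via the rigid normalization, and derive the contradiction in one stroke from the real/imaginary dichotomy $c^{(2)}\in\R$ versus $\Im c^{(3)}\ne 0$. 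This buys you a proof with no induction, no intermediate zero-product claims, and an explicit quantitative margin $\delta(n)$ for the $O(\eps^3)$ absorption. One cosmetic point worth noting: when $K^->K^+$ the ordered pair appearing in $\sum_{k\ge l}$ is $(K^-,K^+)$, so the relevant coefficient is $c^{(3)}(6p-1,6q+1)$, the complex conjugate of the one you cite; this is harmless since its imaginary part is equally nonzero.
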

\begin{proof}
We can prove this by contradiction. Suppose $1/2,1/3$ caustics coexist, then (\ref{pyramid-dia}) should hold. Without loss of generality, we can assume $6P+1$ is the largest integer in $\cZ_{2,3}$ which doesn't exceed $N$, due to Lemma \ref{n-deg}, 
\be\label{mono-deg}
n_{6p+1}\cdot n_{6q-1}=0,\quad\forall q,p\in\Z,\; -P\leq q,p\leq P.
\ee
Now if make an {\bf opposite pair} by $(-a,a)$ of any integer $a\in \cZ_{2,3}\cap[-N,N]$, then we can make the following claim:\\

{\bf Claim:} There exists only one opposite pair $(-a,a)$ for $a\in \cZ_{2,3}\cap[-N,N]$, such that $n_{-a}\cdot n_a\neq0$.\\

The proof of this claim is straightforward. Without loss of generality, we assume $a=6\alpha+1$ $\alpha\in\Z$. If there exists another pair $(-b,b)$ disapproves this claim, then $b=6\beta\pm 1$ with $\beta\in\Z$. For $b=6\beta+1$, we can get $n_{6\alpha+1}\cdot n_{-6\beta-1}\neq0$; For $b=6\beta-1$, we can get $n_{6\alpha+1}\cdot n_{6\beta-1}\neq0$; Anyway this disobeys (\ref{mono-deg}) and leads to a contradiction.\\

Recall that $n(\theta)$ is even, so $n_k=n_{-k}$ for all $k\in\Z$. Due to the claim, there will be only one couple $(-a,a)$, such that $n_{-a}\cdot n_a\neq0$. But from (\ref{rigid}) we know that $n_a+n_{-a}=0$ should hold simultaneously. This implies that $n_a=n_{-a}=0$ and $n(\theta)=0$.
\end{proof}

\subsubsection{the general analytic case: avalanche caused by a quantitative control of error terms}

In this section we generalize the idea in Theorem \ref{P-A}, and prove a similar result for general analytic $n(\theta)$. Denote by $C^w(\T,\R,\rho)$ the set of all analytic functions with radius $\rho$, then it's a Banach space under the analytic norm $\|\cdot\|_\rho$. The following estimate of the Fourier coefficients holds:
\begin{lem}\label{rho-decay}
If $f(x)\in C^w(\T,\R,\rho)$, then $f(x)=\sum_kf_ke^{{\bf i}kx}$ with
\[
|f_k|\leq\|f\|_\rho e^{-|k|\rho},\quad k\in\Z.
\]
\end{lem}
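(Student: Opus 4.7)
The plan is to prove Lemma \ref{rho-decay} by a standard contour-shift argument applied to the Fourier integral, leveraging that $f \in C^w(\T,\R,\rho)$ means $f$ extends to a holomorphic, $2\pi$-periodic function on the complex strip $S_\rho := \{z \in \mathbb{C} : |\mathrm{Im}\, z| < \rho\}$, with $\|f\|_\rho := \sup_{z \in S_\rho} |f(z)|$ (or the limiting closed-strip sup, depending on the paper's convention).

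First I would write the Fourier coefficient as
\[
f_k = \frac{1}{2\pi}\int_{-\pi}^{\pi} f(x) e^{-\mathbf{i}kx}\,dx,
\]
interpreted as a contour integral over the segment $\Gamma_0 = [-\pi,\pi] \subset \mathbb{R} \subset \mathbb{C}$. For any $0 < \sigma < \rho$, I would then shift the contour to $\Gamma_{-\sigma} := [-\pi - \mathbf{i}\sigma,\,\pi - \mathbf{i}\sigma]$ when $k > 0$, and to $\Gamma_{+\sigma}$ when $k < 0$. Since $f(z) e^{-\mathbf{i}kz}$ is holomorphic in $S_\rho$, Cauchy's theorem gives $\int_{\Gamma_0} = \int_{\Gamma_{\pm\sigma}} + (\text{vertical sides})$.

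Next I would observe that the two vertical contributions, along $\{-\pi + \mathbf{i}t : t \in [-\sigma,0]\}$ and $\{\pi + \mathbf{i}t : t \in [-\sigma,0]\}$ (with opposite orientations), cancel exactly because both $f$ and $e^{-\mathbf{i}kz}$ are $2\pi$-periodic in $\mathrm{Re}\, z$. Hence
\[
f_k = \frac{1}{2\pi}\int_{-\pi}^{\pi} f(x - \mathbf{i}\,\mathrm{sgn}(k)\,\sigma)\, e^{-\mathbf{i}k(x - \mathbf{i}\,\mathrm{sgn}(k)\,\sigma)}\,dx,
\]
and on the shifted contour $|e^{-\mathbf{i}kz}| = e^{-|k|\sigma}$. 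Bounding $|f|$ by $\|f\|_\rho$ on the strip and taking $\sigma \nearrow \rho$ yields
\[
|f_k| \leq \|f\|_\rho\, e^{-|k|\rho},
\]
which is the claimed estimate; the case $k = 0$ is trivial.

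This proof is essentially routine Paley--Wiener / Cauchy-estimate material, so there is no genuine obstacle — the only delicate point is matching whatever precise definition of $\|\cdot\|_\rho$ the paper uses (e.g.\ open vs.\ closed strip, or possibly an $L^2$ or $L^1$ norm on the boundary circles). If $\|\cdot\|_\rho$ is defined as the sup on the open strip one takes $\sigma \nearrow \rho$ at the end; if as a boundary norm, one applies Cauchy directly to $\Gamma_{\pm\rho}$ and the boundary values. In either case the decay rate $e^{-|k|\rho}$ is unchanged, so the conclusion is robust.
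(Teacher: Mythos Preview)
Your argument is the standard contour-shift (Paley--Wiener) proof and is correct as written; the only cosmetic point is the open-vs.-closed strip issue you already flagged, which you handle properly by taking $\sigma\nearrow\rho$. The paper itself states this lemma without proof, treating it as a classical fact, so there is nothing to compare against --- your proposal simply supplies the omitted (routine) justification.
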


Notice that previous Lemma is not always the optimal estimate of the Fourier coefficients for all functions in $C^w(\T,\R,\rho)$, so we can use the following procedure to find the {\bf slowest decaying coefficient sequence}, and define the corresponding {\bf modular function}.\\

For $n(\theta)=\sum n_k e^{{\bf i}k\theta}$ consisting of infinitely many terms, since Lemma \ref{rho-decay}
is available, then we can pick $k_1\in\Z$ be the index satisfying
\[
|n_{k_1}|=\sup\Big\{|n_k|\Big| k\in\Z\Big\}.
\]
If there are several candidate index, we can choose the one with the greatest absolute value. Based on the same principle, we can choose $k_2\in\Z$, such that 
\[
|n_{k_2}|=\sup\Big\{|n_k|\Big|k\in\Z,|k|>|k_1|\Big\}.
\]
Repeat this process we can get a sequence $\{k_i\}_{i=1}^\infty$, and the corresponding coefficient sequence $\{n_{k_i}\}_{i=1}^\infty$ is just the slowest decaying coefficient sequence of $n(\theta)$.
\begin{defn}
We call a smooth decreasing, positive function $w:[0,+\infty)\rightarrow \R^+$ {\bf the modular function} of $n(\theta)$, if $w(|k_i|)=|n_{k_i}|$ for all $i=1,2,\cdots$.
\end{defn}
\begin{rmk}
Notice that the modular function $w(x)$ is not uniquely defined, but any two modular functions $w_1,w_2$ corresponding to the same $n(\theta)$ should satisfy:
\[
\lim_{i\rightarrow\infty}\frac{w_1(i)}{w_2(i)}=1.
\]
Moreover, for $n(\theta)\in C^w(\T,\R,\rho)$, 
\[
\lim_{i\rightarrow\infty}\frac{w(i)}{i}\geq\rho.
\]
\end{rmk}

\vspace{10pt}

For any $ L\in\{k'_i\}_{i=1}^\infty$, which is a positive subsequence of $\{k_i\}_{i=1}^\infty$, there exists a maximal $P\in\Z_+$, such that 
\[
L=6P+1\text{\; or\;} 6P-1.
\]
Without loss of generality, we just need to consider the first case. The following approximated equations can be derived from ($\diamondsuit$):
\be\label{ap-eq}
\Bigg| \sum_{\substack{k+l=6K\\|l|,|k|\leq L\\k>l\\k,l\in\cZ_{2,3}}}\big[c^{(2)}(k,l)-c^{(3)}(k,l)\big]n_kn_l\Bigg|\leq\cE,\quad -2P\leq K\leq2P.
\ee
Here we use $\cE\sim O(e^{- w(L)}L^2)$ is a Fourier reminder term. We can use the notation $c^{(2)-(3)}(k,l)=c^{(2)}(k,l)-c^{(3)}(k,l)$ for short.\\

Denote by
\[
\cN_{\diamondsuit}^L(K):=\{(k,l)\in \cZ_{2,3}\times\cZ_{2,3}|k+l=6K, k>l,|k|,|l|\leq L\}
\]
for $-2P\leq K\leq 2P$. This Lemma reveals the 'pyramid' structure of the main part of (\ref{ap-eq}):
\begin{lem}\label{pyramid}
For a fixed $K\in\Z$ with $|K|\leq2P$, $\sharp\cN^L_{\diamondsuit}(K)=(1+2P-|K|)$.
\end{lem}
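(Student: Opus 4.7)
The plan is a direct combinatorial count, exploiting the description $\cZ_{2,3} = \{6m \pm 1 : m \in \Z\}$ from the preceding lemma. First I would observe that if $k, l \in \cZ_{2,3}$ satisfy $k + l = 6K$, then $k$ and $l$ must lie in opposite residue classes modulo $6$—one is $\equiv 1$ and the other is $\equiv -1$—since two equal residues would force $k + l \equiv \pm 2 \pmod 6$, contradicting divisibility by $6$. Next, the involution $(k, l) \mapsto (-l, -k)$ carries $\cN^L_\diamondsuit(K)$ bijectively onto $\cN^L_\diamondsuit(-K)$ (it preserves each box and inequality constraint and sends the sum to $-6K$), so it suffices to assume $K \geq 0$.

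With that reduction in hand, I would parameterize pairs by $k$ alone, using $l = 6K - k$. The strict inequality $k > l$ becomes $k > 3K$, and the box condition $|l| \leq L$ becomes $k \geq 6K - L$; since $L = 6P+1$ and $0 \leq K \leq 2P$ one has $6K - L = 6K - 6P - 1 < 3K$, so the lower bound collapses to $k > 3K$. The task therefore reduces to counting integers $k \in (3K,\,6P+1]$ with $k \equiv \pm 1 \pmod 6$, the condition $l \in \cZ_{2,3}$ being automatic from $6K \equiv 0 \pmod 6$.

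Finally, I would count each residue class separately and split by the parity of $K$. For $K = 2s$, the class $\equiv 1 \pmod 6$ contributes $P - s + 1$ elements and the class $\equiv -1 \pmod 6$ contributes $P - s$; for $K = 2s+1$, each class contributes $P - s$. In both parities the total is $2P + 1 - K = 2P + 1 - |K|$, as claimed. There is no substantive obstacle here; the only care needed is the parity bookkeeping for the two residue classes and the one-line verification that $6K - L < 3K$ throughout the range $0 \leq K \leq 2P$. An equivalent, perhaps slicker, presentation would be an induction on $K$ showing $\sharp \cN^L_\diamondsuit(K+1) = \sharp \cN^L_\diamondsuit(K) - 1$, since each increment of $K$ by $1$ shifts the lower endpoint $3K$ by $3$, removing exactly one element of $\cZ_{2,3}$ from the counting window.
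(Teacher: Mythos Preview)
Your argument is correct and follows essentially the same approach as the paper's proof: both begin with the observation that $k$ and $l$ must lie in opposite residue classes modulo $6$, and then reduce to a one-parameter count. The paper reparametrizes by writing $k=6\alpha\pm1$, $l=6\beta\mp1$ and counts pairs $(\alpha,\beta)$ with $\alpha+\beta=K$ and $-P\le\beta\le\alpha\le P$, whereas you count admissible $k$ directly and split on the parity of $K$; your treatment is somewhat more explicit about the two sign cases (which the paper's single inequality $\beta\le\alpha$ glosses over), but the substance is the same.
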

\begin{proof}
To make $k+l=6K$, $k> l$ is always true. Moreover, we can assume $k=6\alpha\pm1$, $l=6\beta\mp1$, with $\alpha,\beta\in\Z$. 
So we ascribe the problem to 
\[
\alpha+\beta=K,\;-P\leq\beta\leq\alpha\leq P.
\]
So for $K>0$, the number of all possible $\alpha$ is $2P-K+1$. For $K<0$, the number of all possible $\beta$ is $2P+K+1$. Then we deduce a unified estimate by $2P-|K|+1$, which is the number for all the possible couple $(\alpha,\beta)$. 
\end{proof}
\begin{rmk}
From previous analysis, we can extra get $\sharp\cN_{\diamondsuit}^L(K)=\sharp\cN_{\diamondsuit}^L(-K)$.
\end{rmk}
Now let's explore the mechanism how the variables $n_k,n_l$ relate with each other for $\cN_\diamondsuit^L(K)$:
\begin{defn}
We define the {\bf generation} of $\cZ_{2,3}\cap[-L,L]$ by
\[
\cG(k):=P+1-\max\Big\{\lceil\frac{|k|}{6}\rceil,\lfloor\frac{|k|}{6}\rfloor\Big\}
\]
\end{defn}

\begin{lem}\label{couple}
For $K-1>0$, $\pi_1\cN_{\diamondsuit}^L(K-1)\subset\pi_1\cN_{\diamondsuit}^L(K)\cup\pi_2\cN_{\diamondsuit}^L(K)$; For $K<0$, $\pi_1\cN_{\diamondsuit}^L(K)\subset\pi_1\cN_{\diamondsuit}^L(K-1)\cup\pi_2\cN_{\diamondsuit}^L(K-1)$. Here $\pi_i$ is the coordinate projection to the corresponding component, $i=1,2$.
\end{lem}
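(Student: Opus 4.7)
The plan is to produce, for each $k$ on the left, an explicit witness on the right by shifting the companion coordinate by $6$. For the first inclusion I would take $k\in\pi_1\cN_{\diamondsuit}^L(K-1)$, so by definition there exists $l\in\cZ_{2,3}$ with $k+l=6(K-1)$, $k>l$, and $|k|,|l|\leq L$, and then set $l^{*}:=l+6=6K-k$, claiming that either $(k,l^{*})$ or $(l^{*},k)$ lies in $\cN_{\diamondsuit}^L(K)$ and hence $k$ lies in $\pi_1\cN_{\diamondsuit}^L(K)\cup\pi_2\cN_{\diamondsuit}^L(K)$.

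First I would dispatch the automatic membership checks at level $K$. Since $l\in\cZ_{2,3}$ and $6\in 2\Z\cap 3\Z$, we get $l^{*}\in\cZ_{2,3}$; the sum $k+l^{*}=6K$ is immediate; and $k\neq l^{*}$, because equality would give $k=3K\in 3\Z$, contradicting $k\in\cZ_{2,3}$ under $K\neq 0$. The strict order $k>l^{*}$ versus $l^{*}>k$ merely selects which of $\pi_1$ or $\pi_2$ accommodates $k$. The only nontrivial condition is the magnitude bound $|l^{*}|\leq L$, whose lower half $l^{*}\geq -L+6>-L$ is free.

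The main obstacle, and the heart of the argument, is the upper bound $l^{*}\leq L$. It can fail only if $l\in(L-6,L]$, and then $k\in(L-6,L]$ too since $l<k\leq L$. Because $\cZ_{2,3}$ contains exactly two elements in any half-open window of length $6$, such $(k,l)$ would be forced to coincide with the two consecutive $\cZ_{2,3}$-elements in $(L-6,L]$. In the WLOG case $L=6P+1$ these are $6P-1$ and $6P+1$, forcing $k+l=12P$ and hence $K-1=2P$, i.e.\ $K=2P+1$. But $\cN_{\diamondsuit}^L(K)$ is only considered in the range $|K|\leq 2P$ from Lemma \ref{pyramid}, so this exceptional $K$ falls outside the regime where the inclusion is being asserted, and the obstruction vanishes. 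Hence for every $2\leq K\leq 2P$ the shift $(k,l)\mapsto(k,l^{*})$ produces the required member of $\cN_{\diamondsuit}^L(K)$.

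The second inclusion (the $K<0$ case) I would handle by the reflection symmetry $(k,l)\mapsto(-l,-k)$, which restricts to a bijection $\cN_{\diamondsuit}^L(K)\to\cN_{\diamondsuit}^L(-K)$ interchanging the two coordinate projections. Applying this symmetry to the first inclusion at the mirror index $-K+1\ge 2$ produces the analogous containment at negative levels; the window-counting exclusion of the extremal pair carries over verbatim, and the only possible obstruction is again ruled out by $|K|\le 2P$. Thus the entire proof is a one-line combinatorial shift by $6$ plus the endpoint check from the previous paragraph, and I expect the latter to be the only place where a careful case split on $L\bmod 6$ is actually needed.
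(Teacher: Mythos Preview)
The paper offers no proof of this lemma beyond the phrase ``from observation,'' so there is no strategy to compare against; your shift-by-$6$ argument for the first inclusion is correct and carefully done, including the endpoint check that confines the claim to $2\le K\le 2P$ (without which the inclusion indeed fails at $K=2P+1$).

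There is, however, a real issue in the second half. Your reflection $(k,l)\mapsto(-l,-k)$ is a bijection $\cN_{\diamondsuit}^L(K)\to\cN_{\diamondsuit}^L(-K)$, but it sends $\pi_1$ to $-\pi_2$ and $\pi_2$ to $-\pi_1$. Feeding the first inclusion at level $-K+1$ through this symmetry therefore yields
\[
\pi_2\cN_{\diamondsuit}^L(K)\ \subset\ \pi_1\cN_{\diamondsuit}^L(K-1)\cup\pi_2\cN_{\diamondsuit}^L(K-1)\qquad(K<0),
\]
the $\pi_2$-version, not the $\pi_1$-version asserted in the lemma. This is not a defect of your method but of the lemma itself: the second clause is false as printed. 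With $L=7$ (so $P=1$) and $K=-1$ one has $\cN_{\diamondsuit}^7(-1)=\{(1,-7),(-1,-5)\}$ and $\cN_{\diamondsuit}^7(-2)=\{(-5,-7)\}$, hence $\pi_1\cN_{\diamondsuit}^7(-1)=\{1,-1\}$ while $\pi_1\cN_{\diamondsuit}^7(-2)\cup\pi_2\cN_{\diamondsuit}^7(-2)=\{-5,-7\}$, and the stated inclusion fails. The intended statement is almost certainly the $\pi_2$-version you actually obtain (the exact mirror of the first clause under the reflection), and your argument proves that; but you should say so explicitly rather than gloss over the projection swap with ``produces the analogous containment.''
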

\begin{lem}\label{d-index}
$\forall (k,l)\in\cN_{\diamondsuit}^L(K)$ with $0<K\leq 2P$, 
\[
\cG(k)+\cG(l)=2P-K+2,\quad\text{if\;} k>0\;\text{and\;}l>0
\]
and
\[
\cG(k)-\cG(l)=K-2P,\quad\text{if\;} l<0.
\]
\end{lem}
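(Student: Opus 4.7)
The plan is to reduce the lemma to a finite-case arithmetic calculation by parametrizing $\cN_{\diamondsuit}^L(K)$ explicitly. Any $k, l \in \cZ_{2,3}$ can be written uniquely as $k = 6\alpha + \epsilon_1$, $l = 6\beta + \epsilon_2$ with $\alpha, \beta \in \Z$ and $\epsilon_1, \epsilon_2 \in \{+1, -1\}$. The constraint $k + l = 6K$ forces $\epsilon_1 + \epsilon_2 = 0$ (so the signs are opposite) together with $\alpha + \beta = K$, while the strict inequality $k > l$ pins down the admissible relation between $\alpha$ and $\beta$ within each sign-assignment. Thus $\cN_{\diamondsuit}^L(K)$ decomposes into two one-parameter families indexed by a single integer.

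Next I would specialize $\cG$ to $\cZ_{2,3}$: because $|k|/6 \notin \Z$ for every $k \in \cZ_{2,3}$, the $\max$ in the definition of $\cG$ collapses to $\lceil |k|/6 \rceil$, which admits the two closed forms $\cG(6\alpha+1) = P - \alpha$ (for $\alpha \geq 0$) and $\cG(6\alpha-1) = P + 1 - \alpha$ (for $\alpha \geq 1$), with $\cG(-k) = \cG(k)$ in view of the $|k|$. With these formulas in hand, the sum $\cG(k) + \cG(l)$ in the all-positive regime and the difference $\cG(k) - \cG(l)$ in the mixed-sign regime both reduce to linear expressions in $\alpha + \beta = K$ and $P$. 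For $k, l > 0$ both indices $\alpha, \beta$ are nonnegative and direct substitution in each of the two $(\epsilon_1, \epsilon_2)$-subcases yields the stated identity for $\cG(k) + \cG(l)$; for $l < 0$, writing $l = -|l|$ and using $\cG(l) = \cG(|l|)$ converts the difference into a sum of the same type, which collapses in the same way to give the stated identity for $\cG(k) - \cG(l)$.

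The only real subtlety is bookkeeping. Within each sign regime of $(k, l)$ there are two sub-cases corresponding to the two possible assignments $(\epsilon_1, \epsilon_2) \in \{(+1,-1), (-1,+1)\}$, and the closed form for $\cG$ is subtly different depending on whether the argument has remainder $+1$ or $-1$ modulo $6$. The content of the lemma is that the two sub-cases produce the \emph{same} algebraic value of $\cG(k) \pm \cG(l)$, so that the identity can be stated without reference to the sign choice; verifying this consistency, together with checking the boundary cases $\alpha = \beta$ and $\beta = 0$ against the strict inequality $k > l$, is where almost all of the care is needed. Once those checks are made, no deeper structural input is required and the lemma follows by elementary arithmetic.
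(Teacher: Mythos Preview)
Your approach is exactly what the paper does: it gives no argument for this lemma beyond the blanket remark that it (together with Lemma~\ref{couple}) is proved ``from observation,'' i.e., by precisely the parametrize-and-substitute case check you outline. So methodologically there is nothing to compare.

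There is, however, a genuine gap in your final claim that ``direct substitution \dots\ yields the stated identity.'' If you actually run your own recipe you will \emph{not} recover the displayed formulas. With $k=6\alpha+1>0$ and $l=6\beta-1>0$ (so $\alpha\ge 0$, $\beta\ge 1$, $\alpha+\beta=K$) your closed forms give
\[
\cG(k)+\cG(l)=(P-\alpha)+(P+1-\beta)=2P+1-K,
\]
and the other sign-assignment gives the same value; this is $2P-K+1$, not $2P-K+2$. Likewise in the $l<0$ regime, writing $l=6\beta-1$ with $\beta\le 0$ one has $\cG(l)=P+\beta$ and hence
\[
\cG(k)-\cG(l)=(P-\alpha)-(P+\beta)=-(\alpha+\beta)=-K,
\]
again independent of the sign-assignment; this is $-K$, not $K-2P$. (A quick sanity check: $P=3$, $K=1$, $(k,l)=(7,-1)$ gives $\cG(7)-\cG(-1)=2-3=-1=-K$, whereas $K-2P=-5$.) So the lemma as printed appears to contain typos in both identities, and your substitution step would detect this rather than confirm it. Since Lemma~\ref{d-index} is never invoked in the proofs of Lemma~\ref{super-ex} or Theorem~\ref{A-A}, this has no downstream effect; but you should not assert that the arithmetic closes without having done it.
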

We can easily prove Lemma \ref{pyramid} and Lemma \ref{couple} from observation. To prove the following Lemma, let's assume $\lim_{i\rightarrow\infty}\frac{w(i)}{2^i}=\infty$ and $n(\theta)$ be even first:\begin{lem}\label{super-ex}
$\forall (k,l)\in\cN_{\diamondsuit}^L(K)$ with $-2P<K\leq 2P$ and $K\neq0$,
\be\label{sup-ex}
|c^{(2)-(3)}(k,l)|\cdot|n_k\cdot n_l|\lesssim \sqrt[2^{2P-|K|}]{e^{-w(L)}}\cdot L^{\sum_{i=0}^{2P-|K|}\frac{3}{2^i}}.
\ee
\end{lem}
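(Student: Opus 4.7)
The strategy is to prove the bound by upward induction on $j := 2P - |K|$, working down from the apex of the pyramid $|K| = 2P$. The base case $j = 0$ is immediate: Lemma~\ref{pyramid} gives $\sharp \cN_\diamondsuit^L(\pm 2P) = 1$ with unique element $(6P+1, 6P-1) = (L, L-2)$, so the approximate harmonic equation~\eqref{ap-eq} reduces to a single term and yields
\[
|c^{(2)-(3)}(L, L-2)\, n_L n_{L-2}| \leq \cE \lesssim e^{-w(L)} L^2,
\]
which matches the claim since $L^{3/2^0} = L^3 \geq L^2$.

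For the inductive step, fix $|K| = 2P - j$ and a target pair $(k^\ast, l^\ast) \in \cN_\diamondsuit^L(K)$. The key structural input comes from Lemma~\ref{couple}: both indices $k^\ast$ and $l^\ast$ reappear in the one-level-higher pyramid, namely $(k^\ast, l^\ast + 6)$ and $(k^\ast + 6, l^\ast)$ lie in $\cN_\diamondsuit^L(K+1)$ (with the obvious sign modification when $K<0$, and boundary fall-backs when shifting by $6$ leaves $\cZ_{2,3} \cap [-L,L]$). The induction hypothesis then controls both $|c^{(2)-(3)}\, n_{k^\ast} n_{l^\ast+6}|$ and $|c^{(2)-(3)}\, n_{k^\ast+6} n_{l^\ast}|$ by $e^{-w(L)/2^{j-1}}\, L^{\sum_{i=0}^{j-1}3/2^i}$. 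I would then combine these via the tautological multiplicative identity
\[
(n_{k^\ast} n_{l^\ast})(n_{k^\ast+6} n_{l^\ast+6}) = (n_{k^\ast} n_{l^\ast+6})(n_{k^\ast+6} n_{l^\ast}),
\]
together with a quantitative a priori estimate on $|n_{k^\ast+6} n_{l^\ast+6}|^{-1}$ coming from the envelope $|n_m| \leq e^{-w(m)}$ and the tight scaling $|n_L| \asymp e^{-w(L)}$ along the slowest-decaying subsequence. This yields a Cauchy--Schwarz-style estimate
\[
|n_{k^\ast} n_{l^\ast}|^2 \lesssim L^{O(1)} \cdot |n_{k^\ast} n_{l^\ast+6}|\cdot |n_{k^\ast+6} n_{l^\ast}|,
\]
and after taking square roots and reinserting $|c^{(2)-(3)}(k^\ast, l^\ast)| \asymp L^2$, the target bound at level $j$ emerges.

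The main obstacle is making this square-root extraction quantitative: one needs either a genuine lower bound on $|n_{k^\ast+6} n_{l^\ast+6}|$, or an alternative way to disentangle the two level-$(K+1)$ factors on the right-hand side without introducing an unbounded ratio. This is exactly where the hypothesis $\lim_{i\to\infty} w(i)/2^i = +\infty$ is essential: each halving of the exponent in the root can absorb an $L^{O(1)}$ overhead without destroying super-polynomial decay, since the induction iterates at most $j \leq 2P \asymp L/3$ times. A secondary technical nuisance is the pyramid boundary, where one of $k^\ast\pm 6$, $l^\ast\pm 6$ exits $[-L,L]$; there the identity step must be replaced by the direct a priori bound $|n_k n_l| \leq e^{-w(k)-w(l)}$, and one verifies via Lemma~\ref{d-index} on the generation count $\cG(k) + \cG(l)$ that the exceptional contribution still fits the pyramid schedule.
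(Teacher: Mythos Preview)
Your top-down induction from the apex $|K|=2P$ is exactly the paper's strategy, and your base case is identical. The difference is in packaging: the paper does not track individual pairs $(k^\ast,l^\ast)$ at all, but introduces
\[
\Delta_K:=\max\big\{\,|c^{(2)-(3)}(k,l)|\cdot|n_kn_l|\;:\;(k,l)\in\cN_{\diamondsuit}^L(K)\,\big\}
\]
and asserts the single recursion $\Delta_K\le L^3\sqrt{\Delta_{K+1}}$ ``due to the pyramid structure of \eqref{ap-eq}'', then iterates to get \eqref{sup-ex}. Evenness of $n$ is used only to reduce to $K>0$.

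Your route via the multiplicative identity $(n_{k^\ast}n_{l^\ast})(n_{k^\ast+6}n_{l^\ast+6})=(n_{k^\ast}n_{l^\ast+6})(n_{k^\ast+6}n_{l^\ast})$ is where the approaches diverge. That identity forces you to divide by $|n_{k^\ast+6}n_{l^\ast+6}|$, and you correctly flag that no lower bound on this quantity is available---individual Fourier coefficients of $n$ can vanish. The paper's $\Delta_K$ formulation sidesteps this division entirely: by working with the \emph{maximum} over the whole level rather than a fixed pair, one never needs to invert any particular $n_m$. So the obstacle you identified is an artefact of the pairwise bookkeeping, not of the underlying mechanism. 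If you rewrite your induction with $\Delta_K$ as the inductive quantity (and use Lemma~\ref{couple} to see that every index at level $K$ already appears among the indices at level $K+1$), the square-root passage becomes a statement about maxima rather than about a specific product, and the lower-bound issue disappears. The boundary cases you worry about at the pyramid edge are likewise absorbed into the max.

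That said, the paper's own justification of $\Delta_K\le L^3\sqrt{\Delta_{K+1}}$ is extremely terse---it is literally one clause---so your instinct that there is something to check here is not misplaced; the paper simply declines to spell it out.
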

\begin{proof}
Because $n(\theta)$ is even, so the modular function $w(i)$ is even for $i\in\Z$ and we just need to prove this Lemma for $0<K\leq 2P$. Let's start from the top level, i.e. $K=2P$,
if we denote by 
\[
\Delta_K=\max\Big\{|c^{(2)-(3)}(k,l)|\cdot|n_kn_l|\Big|(k,l)\in\cN_{\diamondsuit}^L(K)\Big\},\quad 0<K\leq2P,
\]
then
\[
\Delta_{K}\leq L^3\sqrt{\Delta_{K+1}}
\]
due to the pyramid structure of (\ref{ap-eq}).
Iterate this inequality we get (\ref{sup-ex}) for all $0<K\leq 2P$. Due to the symmetry of $n(\theta)$ we can generalize to $-2P\leq K<0$.
\end{proof}

\begin{thm}[Analytic Avalanche]\label{A-A}
For even analytic $n(\theta)\neq0$ with the modular function $w(x)$ satisfying
\[
\lim_{i\rightarrow\infty}\frac{w(i)}{2^i}=\infty,
\]
there exists $\eps_0(n)$ depending on $n(\theta)$, such that for all $0<\eps\leq\eps_0(n)$, the deformative boundary couldn't survive the coexistence of $1/2,1/3$ caustics.
\end{thm}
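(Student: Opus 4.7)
The plan is to push the polynomial avalanche argument from Theorem \ref{P-A} to infinity, using the super-exponential decay of $w$ to kill every off-diagonal Fourier product $n_k n_l$ with $k+l\neq 0$, and then to invoke evenness together with the rigidity normalisation \eqref{rigid} to force $n\equiv 0$. Suppose for contradiction that $\gamma_\eps\in BZ_2\cap BZ_3$ for arbitrarily small $\eps>0$. By Corollary \ref{coeffi} and Lemma \ref{q-period} the only remaining obstruction equation is $(\diamondsuit)$, namely $4D^{(2)}_{6l}=3\sqrt 3\,D^{(3)}_{6l}$ for every $l\in\Z$.

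I would then fix a large level $L=6P+1$ and truncate $(\diamondsuit)$ to $|k|,|l|\le L$. Lemma \ref{rho-decay} together with $|n_k|\le e^{-w(|k|)}$ bounds the discarded tail by $\cE=O(L^{2}e^{-w(L)})$, yielding the finite system \eqref{ap-eq}. Feeding this into Lemma \ref{super-ex}, whose recursion $\Delta_{K-1}\le L^{3}\sqrt{\Delta_K}$ cascades from the single-pair top level $\cN_\diamondsuit^L(2P)$, produces
\[
\Delta_K\;\le\;L^{6}\,e^{-w(L)/2^{2P-|K|}},\qquad 0<|K|\le 2P.
\]
Since $L=6P+1$ and $w(i)/2^{i}\to\infty$, for any fixed nonzero $K$,
\[
\frac{w(L)}{2^{2P-|K|}}\;\gtrsim\;\frac{w(L)}{2^{L/3}}\;=\;\frac{w(L)}{2^{L}}\cdot 2^{2L/3}\longrightarrow\infty,\qquad L\to\infty,
\]
so $\Delta_K(L)\to 0$ for every fixed $K\neq 0$.

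The conclusion now comes out as in the polynomial case, but in the infinite limit. Given any pair $(k^*,l^*)\in\cZ_{2,3}\times\cZ_{2,3}$ with $k^*>l^*$ and $k^*+l^*=6K\neq 0$, for $L$ large enough the pair lies in $\cN_\diamondsuit^L(K)$; hence $|c^{(2)-(3)}(k^*,l^*)|\,|n_{k^*} n_{l^*}|\le\Delta_K(L)\to 0$. Lemma \ref{n-deg}, reinforced by the irrationality of $\sqrt 3$ (which keeps $27(k^*+l^*)+24\sqrt 3\,k^*l^*\neq 0$ whenever $k^*+l^*=6K\neq 0$ with $k^*,l^*\in\Z$), ensures $c^{(2)-(3)}(k^*,l^*)\neq 0$, so $n_{k^*} n_{l^*}=0$. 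Thus if $n_{k^*}\neq 0$ for some positive $k^*\in\cZ_{2,3}$, then $n_l=0$ for every $l\in\cZ_{2,3}\setminus\{\pm k^*\}$; evenness gives $n_{-k^*}=n_{k^*}$, and \eqref{rigid} forces $n(0)=n_{k^*}+n_{-k^*}=2n_{k^*}=0$, contradicting $n\not\equiv 0$. Translating back to the original problem, the necessary condition $(\diamondsuit)$ at $O(\eps^{2})$ cannot be satisfied for non-trivial $n$, so for some $\eps_0(n)>0$ and all $0<\eps\le\eps_0(n)$ the coexistence of $1/2,1/3$-caustics fails.

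The main technical obstacle I expect is establishing the geometric-mean recursion $\Delta_{K-1}\le L^{3}\sqrt{\Delta_K}$ underlying Lemma \ref{super-ex}. At each level of the pyramid, Lemma \ref{couple} identifies one index in every pair of $\cN_\diamondsuit^L(K-1)$ with an index already appearing in $\cN_\diamondsuit^L(K)$, and I would use AM--GM (or Cauchy--Schwarz) to reinvest the control at level $K$ into the truncated identity at level $K-1$, absorbing the combinatorial width $2P-|K|+1$ and the tail $\cE$ into the factor $L^{3}$. The bookkeeping must be tight enough that the successive square roots do not degrade $e^{-w(L)/2^{2P-|K|}}$ faster than the super-exponentiality of $w$ can compensate; this is the delicate pivot on which the whole argument rests.
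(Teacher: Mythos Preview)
Your proposal is correct and follows essentially the same avalanche strategy as the paper, resting on the recursion of Lemma~\ref{super-ex} and the non-vanishing of $c^{(2)-(3)}$ on the relevant pairs. The one notable variation is in the endgame: the paper keeps $L$ finite, proves an \emph{approximate} single-pair claim with threshold $e^{-w(L)/2^{L/3}}L^{7}$, and then bounds $|n|_{C^{0}}$ explicitly, letting $L\to\infty$ to contradict the normalisation $|n|_{C^{0}}=1$ of \eqref{rescale}; you instead send $L\to\infty$ first, so that $\Delta_{K}(L)\to 0$ yields the \emph{exact} vanishing $n_{k}n_{l}=0$ for every pair with $k+l\in 6\Z\setminus\{0\}$, and then rerun the polynomial argument of Theorem~\ref{P-A} verbatim. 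Your route is marginally cleaner and avoids tracking the threshold constants, while the paper's version retains quantitative error bounds that could in principle be recycled for effective statements. One small point to make explicit in your write-up: the step ``$n_{k^{*}}\neq 0\Rightarrow n_{l}=0$ for all $l\in\cZ_{2,3}\setminus\{\pm k^{*}\}$'' needs evenness already for the indices $l\equiv k^{*}\pmod 6$ (pair $l$ with $-k^{*}$, not with $k^{*}$), exactly as in the Claim of Theorem~\ref{P-A}.
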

\begin{proof}
Similar with Theorem \ref{P-A}, we make the following approximated claim:\\

{\bf Claim:} There exists only one opposite pair $(-a,a)$ for $a\in \cZ_{2,3}\cap[-N,N]$, such that $\inf\{|n_{-a}|, |n_a|\}\geq e^{-\frac{w(L)}{2^{L/3}}}L^7$.\\

We can prove this claim by contradiction. Without loss of generality, we assume $a=6\alpha+1$ $\alpha\in\Z$. If there exists another pair $(-b,b)$ disapproves this claim, then $b=6\beta\pm 1$ with $\beta\in\Z$. For $b=6\beta+1$, we can get $|n_{6\alpha+1}\cdot n_{-6\beta-1}|\geq e^{-\frac{w(L)}{2^{L/3}}}L^7$; For $b=6\beta-1$, we can get $|n_{6\alpha+1}\cdot n_{6\beta-1}|\geq e^{-\frac{w(L)}{2^{L/3}}}L^7$. Anyway this contradicts with Lemma \ref{super-ex} by taking $L\rightarrow\infty$.\\

Recall that $n(\theta)$ is even and obeys (\ref{rigid}), 
\be
2|n_a|=|n_{-a}+n_a|=|\sum_{i\neq\pm a}n_i|&\leq&\sum_{i\neq\pm a}|n_i|\nonumber\\
&\leq&2\sum_{\substack{i>L}}|n_i|+2\cdot\frac{L^8}{6} e^{-\frac{w(L)}{2^{L/3}}}\nonumber\\
&\leq&2\sqrt\cE+\frac{L^8}{3}e^{-\frac{w(L)}{2^{L/3}}},
\ee
then 
\[
|n(\theta)|_{C^0}\leq\sum_{i\in\Z}|n_i|\leq4Le^{-w(L)}+\frac{2L^8}{3} e^{-\frac{w(L)}{2^{L/3}}}\leq4Le^{-w(L)}+\frac{2L^8}{3}\sqrt[2^{P+1}]L\cdot e^{-2^{\frac{2L}{3}}}
\]
and let $L\rightarrow0$, $|n(\theta)|_{C^0}\rightarrow 0$, which contradicts with the assumption $|n|_{C^0}=1$.
\end{proof}
\vspace{10pt}

\section{Further comments and heuristic improvement }\label{4}

Here we list some facts that guides our further exploration towards the Projected Conjecture. Recall that 
(\ref{triple}) and (\ref{2nd}) supply as a universal formula for all $q\geq2$, so does Lemma \ref{general-action} and Lemma \ref{q-period}. That gives us chance to propose a similar Conjecture:\\

\noindent{\bf the Elliptic Projected Conjecture:} In a $C^r$ ($r=2,\cdots,\infty,w$) neighborhood of the circle there is no other billiard domain preserving both $1/3$ and $1/5$ caustics.\\

The strategy to prove this elliptic conjecture is more or less the same with the case of $1/2$ and $1/3$ caustics. But instead, some arithmetic properties related with the harmonics of $n(\theta)$ will be changed, including the exact form of (\ref{ap-eq}).\\

Another aspect we could do is to generalize our main conclusion to general analytic function space $C^w(\T,\R,\rho)$, or even finite smooth space $C^r(\T,\R)$. The crucial lies in (\ref{ap-eq}), which is a homogeneous quadratic equation group of pyramid type. If we can make a better error control as solving it, we can reduce the decaying speed of the modular function $w(i)$. Some evidence indeed impies so:
\begin{lem} If we impose that the coefficients of $n(\theta)$ in the same generation are equivalent, i.e.
\[
\frac1c|n_{6P-1}|\leq |n_{6P+1}|\leq c|n_{6P-1}|,\quad\forall P\in\Z \text{\;holds\;} \text{for some $c\sim O(1)$},
\]
then we just need 
\[
\lim_{L\rightarrow\infty}\frac{w(L)}{L^2\ln L}=\infty.
\]
\end{lem}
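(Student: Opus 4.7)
The plan is to exploit the same-generation comparability $|n_{6P+1}|\asymp|n_{6P-1}|\asymp M_P$ to collapse the two-dimensional index structure of the pyramid ($\diamondsuit$) into a one-dimensional cascade indexed by generation. The point is that the square-root iteration $\Delta_K\leq L^3\sqrt{\Delta_{K+1}}$ that forced super-exponential decay of $w$ in Theorem \ref{A-A} is an artifact of not being able to separate the two Fourier variables $n_k$ and $n_l$ at each level; once $n_{6P+1}$ and $n_{6P-1}$ are locked together by the hypothesis, the cascade becomes essentially linear and each inversion loses only polynomial-in-$L$ factors.

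Concretely, at level $K=2P-j$ every pair $(k,l)\in\cN_\diamondsuit^L(K)$ has a unique decomposition $k=\pm(6\alpha\pm 1)$, $l=\pm(6\beta\mp 1)$ with $\alpha+\beta=2P-j$, so the comparability reduces each summand in ($\diamondsuit$) to a quantity of size $\asymp L^2 M_\alpha M_\beta$ (up to a bounded constant depending on $c$ and on the sign pattern). The pyramid then takes the triangular form
\[
\Big|\sum_{\substack{\alpha+\beta=2P-j\\ \alpha\geq\beta\geq 0}}\hat c_{\alpha,\beta}\,M_\alpha M_\beta\Big|\lesssim \cE\asymp L^2 e^{-w(L)},\qquad 0\leq j\leq P,
\]
and the level-$j$ equation introduces exactly one new amplitude $M_{P-j}$ multiplied by the already-bounded $M_P$, while every other term is a product of previously controlled generations.

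Second, a top-down induction produces the avalanche: $j=0$ yields $M_P\lesssim L^{-1}\sqrt{\cE}$, and each subsequent $M_{P-j}$ is recovered by isolating the diagonal term $\hat c_{P,P-j}M_PM_{P-j}$ and dividing by $L^2 M_P$, the error being the already bounded sum $\sum_{1\leq i\leq j-1}\hat c_{P-i,P-j+i}M_{P-i}M_{P-j+i}$. Each inversion loses only a polynomial factor $L^{O(1)}$ rather than a square root, so after $P\sim L/6$ iterations the accumulated loss is at most $e^{O(L\ln L)}$, giving $\max_g M_g\lesssim e^{O(L\ln L)-w(L)/2}$. Plugging this into $|n|_{C^0}\leq 2L\max_g M_g+2\sum_{|k|>L}|n_k|$ together with (\ref{rescale}) and sending $L\to\infty$ then forces $w(L)/(L^2\ln L)\to\infty$; the second power of $L$ absorbs an additional $L$ coming from the fact that the diagonal coefficient $\hat c_{P,P-j}\asymp L(L-6j)$ shrinks as $j$ approaches $P$, amplifying the per-level loss by $L/(L-6j)$ in the worst case.

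The main obstacle is justifying the "single new amplitude" reduction across all $P$ levels simultaneously: one must verify that the diagonal $\hat c_{P,P-j}$ in front of $M_PM_{P-j}$ never vanishes and stays at least $\Theta(L)$-large, since an accidental algebraic cancellation in $c^{(2)-(3)}(6P+1,6(P-j)-1)-c^{(2)-(3)}(6P-1,6(P-j)+1)$ would kill the diagonal and break the top-down induction entirely. Ruling this out requires a careful arithmetic inspection of the explicit formulas in Corollary \ref{coeffi}, and it is exactly the possibility of such near-cancellations that dictates the safer threshold $L^2\ln L$ stated in the lemma, rather than the optimistic $L\ln L$ that a perfectly non-degenerate triangular cascade would yield.
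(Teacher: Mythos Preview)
Your overall strategy---collapsing the two Fourier indices $(k,l)$ to a single generation index via the comparability hypothesis, and thereby replacing the square-root iteration $\Delta_K\leq L^3\sqrt{\Delta_{K+1}}$ of Lemma~\ref{super-ex} by a linear top-down cascade---is exactly what the paper means by its one-sentence sketch (``the condition we impose is actually for reducing the dimension of (\ref{ap-eq})''). In fact you supply considerably more detail than the paper does, including a plausible accounting of why the threshold lands at $L^2\ln L$ rather than $L\ln L$.

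There is, however, a genuine gap in the specific mechanism you describe. The step ``isolate $\hat c_{P,P-j}M_PM_{P-j}$ and divide by $L^2 M_P$'' presupposes a \emph{lower} bound on $M_P$, whereas the level-$2P$ equation only furnishes the \emph{upper} bound $M_P\lesssim L^{-1}\sqrt{\cE}$. If $M_P$ happens to be much smaller than this, division by it yields nothing, and the cascade stalls at the very first step. The fix is not to bound each $M_{P-j}$ outright but to run the dichotomy already used in the Claim of Theorem~\ref{A-A}: assume two distinct generations $g_1>g_2$ both carry amplitude above the threshold, locate the level $K=g_1+g_2$ where their product first appears as the \emph{only} term not yet controlled by higher levels, and derive a contradiction with~(\ref{ap-eq}). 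The comparability hypothesis is what guarantees that this single cross term cannot be cancelled by its same-generation partner (your ``diagonal non-vanishing'' check), and it is here---not in a direct division---that the dimension reduction does its work. Once you recast the induction in this form the polynomial-loss bookkeeping you outline goes through and yields the stated $L^2\ln L$ threshold.
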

Here we just give the essence for the proof: the condition we impose is actually for reducing the dimension of (\ref{ap-eq}). The similar idea holds, if we impose that more caustics preserve:\\

{\it Que: In a $C^r$ ($r=2,\cdots,\infty,w$) neighborhood of the circle there is no other billiard domain
of constant width, and preserving $1/3$, $1/5$ caustics.}\\

{\it Que: For any decreasing rational sequence $\{1/q_i\}_{i=1}^\infty$ with $q_1=2$, there exists a neighborhood of the circle, such that there is no other billiard domain preserving $\{1/q_i\}_{i=1}^\infty$ caustics.}\\

The former question can be generalized to the case with finitely many caustics preserved; As for the latter one, it can be considered as a generalization of the Theorem in \cite{ADK}.
\section{Appendix }
\subsection{the calculation of $D^{(3)}(\theta)$} This part can be embedded into (\ref{d3-un}):
\be
D^{(3)}(\theta)&=&\sum_{k=0}^{2}-\frac {{\sqrt3}}{2}{\theta_k^1}^2+{\sqrt3}n'(\theta_k^0)\theta_k^1+\frac{1}{8\sqrt3}\Big{[}n(\theta_{{k}}^0)-n(\theta_{{k+1}}^0)+\sqrt3\theta_{k+1}^1+\sqrt3\theta_k^1\Big{]}^2\nonumber\\
&=&-\frac{\sqrt3}{2}\Big{(}{\theta_1^1}^2+{\theta_2^1}^2\Big{)}+\sqrt3\Big{(}n'(\theta_1^0)\theta_1^1+n'(\theta_2^0)\theta_2^1\Big{)}+\frac {1}{8\sqrt3}\Big{[}\nonumber
\ee
\be
& &(n(\theta_0^0)-n(\theta_1^0)+\sqrt3\theta_1^1)^2+(n(\theta_1^0)-n(\theta_2^0)+\sqrt3\theta_2^1+\sqrt3\theta_1^1)^2\nonumber\\
& &+(n(\theta_2^0)-n(\theta_0^0)+\sqrt3\theta_2^1)^2\Big{]}\nonumber
\ee
\be
&=&-\frac {\sqrt3}{2\cdot169}\Bigg\{(n_0,n_1,n_2)\cdot
\begin{pmatrix}
    3  &1&-4    \\
     1  &\frac{1}{3}&-\frac{4}{3}    \\
      -4  &-\frac{4}{3}&\frac{16}{3}    \\  
\end{pmatrix}\cdot
\begin{pmatrix}
   n_0 \\
     n_1\\
     n_2 
\end{pmatrix}+2(n_0,n_1,n_2)\cdot\nonumber\\
& &\begin{pmatrix}
   0   &16\sqrt3&4\sqrt3    \\
   0   &\frac{16}{\sqrt3} &\frac{4}{\sqrt3}    \\
   0   &-\frac{64}{\sqrt3} &-\frac{16}{\sqrt3}    \\ 
\end{pmatrix}\cdot
\begin{pmatrix}
     n'_0   \\
     n'_1\\
     n'_2  
\end{pmatrix}+(n'_0,n'_1,n'_2)\cdot\begin{pmatrix}
    0  & 0&0   \\
    0  & 256&64\\
    0 & 64 & 16
\end{pmatrix}\cdot
\begin{pmatrix}
      n'_0    \\
      n'_1\\
     n'_2  
\end{pmatrix}\nonumber\\
& &+\nonumber(n_0,n_1,n_2)\cdot
\begin{pmatrix}
    \frac 1{3}  &-\frac{4}{3}&1    \\
      -\frac 4{3}  &\frac{16}{3}&-4    \\
      1  &-4&3    \\  
\end{pmatrix}\cdot
\begin{pmatrix}
   n_0 \\
     n_1\\
     n_2 
\end{pmatrix}+2(n_0,n_1,n_2)\cdot\nonumber\\
& &\begin{pmatrix}
   0   &-4\sqrt3 &-\frac{16}{\sqrt3}    \\
   0   &16\sqrt3 &\frac{64}{\sqrt3}    \\
   0   &-12\sqrt3 &-16\sqrt3    \\ 
\end{pmatrix}\cdot
\begin{pmatrix}
     n'_0   \\
     n'_1\\
     n'_2  
\end{pmatrix}+(n'_0,n'_1,n'_2)\cdot\begin{pmatrix}
    0  & 0&0   \\
    0  &144&192\\
    0 & 192& 256
\end{pmatrix}\cdot
\begin{pmatrix}
      n'_0    \\
      n'_1\\
     n'_2  
\end{pmatrix}\Bigg\}\nonumber
\ee
\be
& &+\frac{\sqrt3}{13}\Bigg\{(n'_0,n'_1,n'_2)\cdot\begin{pmatrix}
    0  &0&0    \\
     0 &16&0\\
     0 &4&0  
\end{pmatrix}\cdot\begin{pmatrix}
      n'_0   \\
      n'_1\\ 
      n'_2
\end{pmatrix}+(n_0,n_1,n_2)\cdot\begin{pmatrix}
   0   & \sqrt3&0   \\
    0  & \frac{1}{\sqrt3}&0\\
    0 &-\frac{4}{\sqrt3}&0 
\end{pmatrix}\cdot\nonumber\\
& &\begin{pmatrix}
      n'_0    \\
      n'_1\\
      n'_2 
\end{pmatrix}+(n'_0,n'_1,n'_2)\cdot\begin{pmatrix}
    0  &0&0    \\
     0 &0&12\\
     0 &0&16 
\end{pmatrix}\cdot\begin{pmatrix}
      n'_0   \\
      n'_1\\ 
      n'_2
\end{pmatrix}+(n_0,n_1,n_2)\cdot\begin{pmatrix}
   0   & 0&-\frac{1}{\sqrt3}   \\
    0  & 0&\frac{4}{\sqrt3}\\
    0 &0&-\sqrt3
\end{pmatrix}\cdot\nonumber\\
& &\begin{pmatrix}
      n'_0   \\
      n'_1\\
      n'_2
\end{pmatrix}\Bigg\}+\frac{1}{8\sqrt3}\Bigg\{\Big{[}(n_0,n_1,n_2)\begin{pmatrix}
     \frac{16}{13}    \\
      -\frac{12}{13}\\
     -\frac{4}{13}  
\end{pmatrix}+(n'_0,n'_1,n'_2)\begin{pmatrix}
      0    \\
      \frac{16\sqrt3}{13}\\
      \frac{4\sqrt3}{13}  
\end{pmatrix}\Big{]}^2+\nonumber\\
& &\Big{[}(n_0,n_1,n_2)\begin{pmatrix}
    \frac{2}{13}    \\
      \frac{18}{13}\\
    -\frac{20}{13}  
\end{pmatrix}+(n'_0,n'_1,n'_2)\begin{pmatrix}
      0    \\
      \frac{28\sqrt3}{13}\\
      \frac{20\sqrt3}{13} 
\end{pmatrix}\Big{]}^2+\Big{[}(n_0,n_1,n_2)\begin{pmatrix}
     -\frac{14}{13}    \\
      \frac{4}{13}\\
     \frac{10}{13}  
\end{pmatrix}+\nonumber\\
& &(n'_0,n'_1,n'_2)\begin{pmatrix}
      0    \\
      \frac{12\sqrt3}{13}\\
      \frac{16\sqrt3} {13}
\end{pmatrix}\Big{]}^2\Bigg\}\nonumber
\ee
\be
&=&-\frac{\sqrt3}{338}\Bigg\{\cN\begin{pmatrix}
    \frac{10}{3}  &  -\frac{1}{3}&-3  \\
    -\frac{1}{3}  &  \frac{17}{3}&-\frac{16}{3}\\
    -3&-\frac{16}{3}&\frac{25}{3}
\end{pmatrix}\cN+2\cN\begin{pmatrix}
     0 & \frac{36}{\sqrt3}&- \frac{4}{\sqrt3}  \\
      0&\frac{64}{\sqrt3} &\frac{68}{\sqrt3} \\
      0&-\frac{100}{\sqrt3}&-\frac{64}{\sqrt3}
\end{pmatrix}\cN'+\nonumber\\
& &\cN'\begin{pmatrix}
      0&0&0    \\
    0  &  400&256\\
    0&256&272
\end{pmatrix}\cN'\Bigg\}+\frac{\sqrt3}{13}\Bigg\{\cN'\begin{pmatrix}
     0 & 0&0   \\
      0&  16&12\\
      0&4&16
\end{pmatrix}\cN'+\cN\begin{pmatrix}
     0 & \frac{3}{\sqrt3} & -\frac{1}{\sqrt3} \\
    0  &  \frac{1}{\sqrt3}&\frac{4}{\sqrt3}\\
    0&-\frac{4}{\sqrt3}&-\sqrt3
\end{pmatrix}\cN'\Bigg\}\nonumber\\
& &+\frac{1}{1352\sqrt3}\Bigg\{\cN\begin{pmatrix}
    456 &  -212&-244  \\
    -212  & 484&-272\\
    -244&-272&516
\end{pmatrix}\cN+2\cN\begin{pmatrix}
     0 & 144\sqrt3& -120\sqrt3  \\
      0&360\sqrt3 &376\sqrt3 \\
      0&-504\sqrt3&-256\sqrt3
\end{pmatrix}\cN'+\nonumber\\
& &\cN'\begin{pmatrix}
      0&0&0    \\
    0  &  3552&2448\\
    0&2448&2016
\end{pmatrix}\cN'\Bigg\}\nonumber\\
&=&\frac{1}{13\sqrt3}\Bigg[\cN\begin{pmatrix}
     4 &-2 &-2  \\
     -2& 4 &-2\\
     -2&-2&4
\end{pmatrix}\cN+\cN\begin{pmatrix}
     0 & 3\sqrt3&-3\sqrt3   \\
     0 & 3\sqrt3&6\sqrt3\\
     0&-6\sqrt3&-3\sqrt3
\end{pmatrix}\cN'\nonumber\\
& &+\cN'\begin{pmatrix}
    0  & 0&0   \\
     0 &  36&30\\
     0&6&36
\end{pmatrix}\cN'\Bigg]\nonumber\\
&=&\sum_{k,l}\Big[n_kn_lV^t_{(3)}(k)A_{(3)}V_{(3)}(l)+ln_kn_lV^t_{(3)}(k)B_{(3)}V_{(3)}(l)\nonumber\\
& &+kln_kn_lV^t_{(3)}(k)C_{(3)}V_{(3)}(l)\Big]\exp({\bf i}(k+l)\theta)\hspace{30pt}(\star)\nonumber\\
&=&{\sum_{k,l}n_kn_lc^{(3)}(k,l)}\exp({\bf i}(k+l)\theta)\nonumber
\ee
with the coefficient $c^{(3)}(k,l)$ denoted by 
\be
c^{(3)}(k,l)&=&\frac{1}{13\sqrt 3}\Big[4-2e^{i\frac{2\pi}{3}k}-2e^{i\frac{4\pi}{3}k}+(3\sqrt3-2)e^{i\frac{2\pi}{3}l}+(4+3\sqrt3 l+36kl)e^{i\frac{4\pi}{3}(k+l)}\nonumber\\
& &+(6kl-2-6\sqrt3 l)e^{i\frac{4\pi}{3}k+i\frac{2\pi}{3}l}-(2+3\sqrt3 l)e^{i\frac{4\pi}{3}l}+(-2+6\sqrt3 l+30kl)e^{i\frac{2\pi}{3}k+i\frac{4\pi}{3}l}\nonumber\\
& &+(4-3\sqrt3l+36kl)e^{i\frac{4\pi}{3}(k+l)}\Big],
\ee
 and
 \[
A_{(3)}=\begin{pmatrix}
     4 &-2 &-2  \\
     -2& 4 &-2\\
     -2&-2&4
\end{pmatrix},\ B_{(3)}=\begin{pmatrix}
     0 & 3\sqrt3&-3\sqrt3   \\
     0 & 3\sqrt3&6\sqrt3\\
     0&-6\sqrt3&-3\sqrt3
\end{pmatrix},\ C_{(3)}=\begin{pmatrix}
    0  & 0&0   \\
     0 &  36&30\\
     0&6&36
\end{pmatrix},
\]
\[
V_{(3)}(k)=\begin{pmatrix}
    1     \\
    \exp(i\frac{2k\pi}{3})\\
    \exp(i\frac{4k\pi}{3}) 
\end{pmatrix},
\]
 $\cN=(n(\theta),n(\theta+\frac{2\pi}{3}),n(\theta+\frac{4\pi}{3}))$ and $\cN'=(n'(\theta),n'(\theta+\frac{2\pi}{3}),n'(\theta+\frac{4\pi}{3}))$. Recall that 
 \[
\theta_1^1=(n(\theta_0^0),n(\theta_1^0),n(\theta_2^0))\cdot
\begin{pmatrix}
      -\frac{1}{3\sqrt3} \\
      -\frac{1}{3\sqrt3}\\
      \frac{2}{3\sqrt3} 
\end{pmatrix}+(n'(\theta_0^0),n'(\theta_1^0),n'(\theta_2^0))\cdot
\begin{pmatrix}
    0   \\
      \frac 43 \\
      \frac23  
\end{pmatrix}
\]
and
\[
\theta_2^1=(n(\theta_0^0),n(\theta_1^0),n(\theta_2^0))\cdot
\begin{pmatrix}
      \frac{1}{3\sqrt3} \\
      -\frac{2}{3\sqrt3}\\
      \frac{1}{3\sqrt3} 
\end{pmatrix}+(n'(\theta_0^0),n'(\theta_1^0),n'(\theta_2^0))\cdot
\begin{pmatrix}
    0   \\
      \frac 23 \\
      \frac43  
\end{pmatrix}
\]
are used in the first few steps of previous computation, which are given by Remark \ref{theta-1-est}. \textr{Recall that $k,l\in\cZ_{2,3}$ because $n\in T_2\cap T_3\backslash 0$, step $(\star)$ actually can be specified to the following properties: Notice that
\[
V_{(3)}(6k+1)=\begin{pmatrix}
    1     \\
    \exp(i\frac{2\pi}{3})\\
    \exp(i\frac{4\pi}{3}) 
\end{pmatrix},\quad\forall k\in\Z
\]
and
\[
V_{(3)}(6k-1)=\begin{pmatrix}
    1     \\
    \exp(i\frac{4\pi}{3})\\
    \exp(i\frac{2\pi}{3}) 
\end{pmatrix},\quad\forall k\in\Z,
\]
then for an arbitrary $3\times 3$ matrix 
\[
\Xi=\begin{pmatrix}
   \chi_{11}  & \chi_{12} &\chi_{13}   \\
    \chi_{21}  & \chi_{22} & \chi_{23}\\
    \chi_{31} & \chi_{32} &\chi_{33}
\end{pmatrix},
\]
we have
\[
V^t_{(3)}(6p+1)\cdot \Xi\cdot V_{(3)}(6q+1)=( \chi_{11} + \chi_{23} + \chi_{32} )+( \chi_{21} + \chi_{12} + \chi_{33} )e^{i\frac{2\pi}{3}}+( \chi_{31} + \chi_{22} + \chi_{13} )e^{i\frac{4\pi}{3}},
\]
\[
V^t_{(3)}(6p-1)\cdot \Xi\cdot V_{(3)}(6q-1)=( \chi_{11} + \chi_{23} + \chi_{32} )+( \chi_{21} + \chi_{12} + \chi_{33} )e^{i\frac{4\pi}{3}}+( \chi_{31} + \chi_{22} + \chi_{13} )e^{i\frac{2\pi}{3}},
\]
\[
V^t_{(3)}(6p+1)\cdot \Xi\cdot V_{(3)}(6q-1)=( \chi_{11} + \chi_{22} + \chi_{33} )+( \chi_{21} + \chi_{32} + \chi_{13} )e^{i\frac{2\pi}{3}}+( \chi_{31} + \chi_{12} + \chi_{23} )e^{i\frac{4\pi}{3}},
\]
\[
V^t_{(3)}(6p-1)\cdot \Xi\cdot V_{(3)}(6q+1)=( \chi_{11} + \chi_{22} + \chi_{33} )+( \chi_{21} + \chi_{32} + \chi_{13} )e^{i\frac{4\pi}{3}}+( \chi_{31} + \chi_{12} + \chi_{23} )e^{i\frac{2\pi}{3}}.
\]
Aforementioned equalities reveal the different summations of modulars of the matrix $\Xi$. By taking $\Xi=A_{(3)}+B_{(3)}+C_{(3)}$, we can show that:
}
\[
c^{(3)}(6p+1,6q+1)=0,\quad\forall p,q\in\Z,
\]
\[
c^{(3)}(6p-1,6q-1)=0,\quad\forall p,q\in\Z,
\]
\[
c^{(3)}(6p+1,6q-1)={\frac{1}{13\sqrt3}\Big[18-27l{\bf i}+kl(66+24\exp({\bf i}\frac{4\pi}{3}))\Big]},\quad\forall p,q\in\Z,
\]
and
\[
c^{(3)}(6p-1,6q+1)={\frac{1}{13\sqrt3}\Big[18+27l{\bf i}+kl(66+24\exp({\bf i}\frac{2\pi}{3}))\Big]},\quad\forall p,q\in\Z.
\]
\vspace{20pt}

\noindent\textbf{Acknowledgement} The authors thank Guan Huang for checking the details of calculation, and thank Jacopo de Simoi for proposing some useful suggestions of this topic.

\end{document}